%% file: top.tex
\documentclass[10pt,twoside,reqno,a4paper]{amsart}
\pdfoutput=1
\usepackage[utf8]{inputenc}
\usepackage{courier}
\usepackage[T1]{fontenc}

\usepackage[mathbf]{euler}
\usepackage{topformat}
\usepackage[driver=pdftex,margin=3cm,heightrounded=true,centering]{geometry}
\usepackage{hyperref}
\usepackage{cleveref}
\usepackage{enumerate}
\usepackage{transparent}
\usepackage{amssymb}
\usepackage{amsopn}
\usepackage{amsmath}
\usepackage{mathtools}
\usepackage[backend=biber, style=alphabetic, url=false, maxbibnames=99,
sorting=nyt]{biblatex}
\usepackage{tikz}
\usepackage{tikz-cd}
\usepackage{ifthen}
\usepackage{graphics}
\usepackage{amsthm}
\usepackage{topthm}
\author{Thorben Kastenholz}
\thanks{This research is funded by the Deutsche Forschungsgemeinschaft (DFG,
German Research Foundation) – Project number 281869850}
\date{\today}
\title{Bounded cohomology, Codimension two submanifolds and Pontryagin-Thom
constructions}
\address{Karlsruher Institut f\"ur Technologie, Englerstraße 2, 76131
  Karlsruhe, Germany}
\email{tkastenholz@gmx.de}

\begin{document}
\input{commands.tex}

\input{Abstract.tex}
\maketitle
\section{Introduction}
\input{Section/Introduction.tex}
\section{Splitting manifolds and bounded cohomology}
\input{Section/strategy.tex}
\section{Codimension 2 subspaces and their regular neighborhoods}
\input{Section/Codim.tex}
\section{Complements, Fundamental Groups and Pontryagin-Thom Constructions}
\input{Section/Construction.tex}
\printbibliography
\end{document}

%% file: commands.tex
\newcommand{\introduce}[1]
  {\textbf{#1}}
\newcommand{\tk}[1]{\todo[size=\tiny,color=green!40]{TK: #1}}
\newcommand\blfootnote[1]{%
  \begingroup
  \renewcommand\thefootnote{}\footnote{#1}%
  \addtocounter{footnote}{-1}%
  \endgroup
}

\newcommand{\apply}[2]
  {{#1}\!\left({#2}\right)}
\newcommand{\at}[2]
  {\left.{#1}\right\rvert_{#2}}
\newcommand{\Identity}%
  {\mathrm{Id}}
\newcommand{\NaturalNumbers}%
  {\mathbf{N}}
\newcommand{\Integers}%
  {\mathbf{Z}}
\newcommand{\Rationals}%
  {\mathbf{Q}}
\newcommand{\Reals}%
  {\mathbf{R}}
  \newcommand{\ComplexNumbers}%
  {\mathbf{C}}
\newcommand{\AbstractProjection}[1] 
  {p_{#1}}
\newcommand{\RealPart}[1]
  {\apply{\operatorname{Re}}{#1}}
\newcommand{\ImaginaryPart}[1]
  {\apply{\operatorname{Im}}{#1}}
\newcommand{\Floor}[1]
  {\left \lfloor #1 \right \rfloor}
\newcommand{\Norm}[1]
  {\left|\left|#1\right|\right|}
\newcommand{\MappingCone}[2]
  {\apply{\mathrm{Cone}}{#1,#2}}

\newcommand{\Surface}[1]
  {\Sigma_{#1}}
\newcommand{\SurfaceGroup}[1]
  {S_{#1}}
\newcommand{\Manifold}%
  {M}
\newcommand{\NonOrientableManifold}
  {N}
\newcommand{\Tangenbundle}[1]
  {T#1}
\newcommand{\NormalBundle}[2]
  {N_{#2}#1}
\newcommand{\FiberTransferHomology}[1]
  {#1^{!}}
\newcommand{\Submanifold}%
  {N}
\newcommand{\TubularNeighborhood}[1]
  {U_{#1}}
\newcommand{\ManifoldAlternative}%
  {P}
 \newcommand{\ManifoldAuxiliary}%
  {K}
\newcommand{\NullBordism}%
  {W}
\newcommand{\Bordism}
  {P}
 \newcommand{\ManifoldFiber}%
 {M}
 \newcommand{\ManifoldTotal}%
 {E}
  \newcommand{\ManifoldBase}%
  {B}
\newcommand{\SmoothMap}%
  {\phi}
  \newcommand{\MorseFunction}%
  {f}
\newcommand{\Diffeomorphism}%
  {\Phi}
\newcommand{\Dimension}%
  {d}
  \newcommand{\HalfDimension}%
  {n}
\newcommand{\FundamentalClass}[1]
  {\left[#1\right]}
\newcommand{\Interval}%
  {I}
\newcommand{\Ball}[1]
  {D^{#1}}
\newcommand{\Sphere}[1] 
  {S^{#1}}
\newcommand{\Torus}[1]
  {T^{#1}}
\newcommand{\SimplicialVolume}[1]
  {\lvert \lvert #1 \rvert \rvert}
\newcommand{\ellone}%
  {\ell_{1}}
\newcommand{\Boundary}[1]
  {\partial #1}
\newcommand{\ComplexOfEmbeddings}[1]
  {\apply{K}{#1}}
\newcommand{\GenusOf}[1]
  {\apply{\Genus}{#1}}
\newcommand{\StableGenusOf}[1]
  {\apply{\overline{\Genus}}{#1}}
\newcommand{\Surgery}%
  {\natural}
\newcommand{\HandleEmbedding}%
  {\Phi}
\newcommand{\Singularity}%
  {X}
\newcommand{\Grassmannian}[3]
  {\apply{\mathrm{Gr}_{#1}}{#2^{#3}}}
\newcommand{\TautologicalBundle}[1]
  {\gamma_{#1}}
\newcommand{\ThomSpace}[1]
  {\apply{\mathrm{Th}}{#1}}
\newcommand{\MappingCylinder}[1]
  {\apply{\mathrm{Cyl}}{#1}}
\newcommand{\ClassifyingSpaceWithMapping}[1]
  {X_{#1}}
\newcommand{\ClassifyingSpaceWithMappingHigherStratum}[2]
  {X_{#1}^{#2}}
\newcommand{\MappingSpace}[2]
  {\apply{\mathrm{Map}}{#1,#2}}
\newcommand{\Evaluation}
  {\mathrm{ev}}

\newcommand{\DiskBundle}[1]
  {\apply{\mathrm{D}}{#1}}
\newcommand{\SphereBundle}[1]
  {\apply{\mathrm{S}}{#1}}

\newcommand{\Diff}[1]
  {\mathrm{Diff}\!\left(#1\right)}
\newcommand{\DiffGroup}[1]
  {\mathrm{Diff}^{B\Group}\!\left(#1\right)}
\newcommand{\DiffZero}[1]
  {\mathrm{Diff}_0\!\left(#1\right)}
\newcommand{\DiffOne}[1]
  {\widetilde{\mathrm{Diff}}_0\!\left(#1\right)}
\newcommand{\HomeoGroup}[1]
  {\apply{\mathrm{Homeo}}{#1}}
\newcommand{\HomeoCompactGroup}[1]
  {\apply{\mathrm{Homeo}_{c}}{#1}}
\newcommand{\HomeoLowerGroup}[1]
  {\apply{\mathrm{Homeo}^{\geq}}{#1}}

\newcommand{\FiberingSpace}%
  {E}
\newcommand{\FiberingProjektion}[1] 
  {\pi_{#1}}
\newcommand{\Fiber}%
  {F}
\newcommand{\FiberDimension}%
  {d}
\newcommand{\Base}%
  {B}
\newcommand{\ClutchingFunction}[1] 
  {\varphi_{#1}}

\newcommand{\Group}%
  {\Gamma}
\newcommand{\Subgroup}
  {H}
\newcommand{\AmenableGroup}
  {A}
\newcommand{\GroupElement}%
  {g}
\newcommand{\Genus}%
  {g}
\newcommand{\QuadraticModule}%
  {\mathbf{M}}
\newcommand{\WittIndex}[1]
  {\apply{\Genus}{#1}}
\newcommand{\StableWittIndex}[1]
  {\apply{\overline{\Genus}}{#1}}
\newcommand{\ComplexOfHyperbolicInclusions}[1]
  {\apply{K^{a}}{#1}}
\newcommand{\ChainContraction}[1]
  {H_{#1}}
\newcommand{\FreeGroup}[1]
  {F_{#1}}
\newcommand{\BraidGroup}[2]
  {\apply{B_{#1}}{#2}}
\newcommand{\PureBraidGroup}[2]
  {P\BraidGroup{#1}{#2}}
\newcommand{\Multidiagonal}[2]
  {\apply{\Delta_{#1}}{#2}}

\newcommand{\HomologyClass}%
  {\alpha}
\newcommand{\HomologyOfSpaceObject}[3]
  {\apply{H_{#1}}{#2 ; #3}}
\newcommand{\CohomologyOfSpaceObject}[3]
  {\apply{H^{#1}}{#2 ; #3}}
\newcommand{\BoundedCohomologyOfSpaceObject}[3]
  {\apply{H^{#1}_{\text{b}}}{#2 ; #3}}
\newcommand{\BoundedCohomologyOfSimplicialObject}[3]
  {\apply{H^{#1}_{\text{b, s}}}{#2 ; #3}}
\newcommand{\HomologyOfSpaceMorphism}[1]
  {{#1}_{\ast}}
\newcommand{\HomologyOfGroupObject}[3]
  {\apply{H_{#1}}{#2; #3}}
\newcommand{\HomologyOfGroupMorphism}[2]
  {{#1}_{\ast}}
\newcommand{\HomologyOfSpacePairObject}[3]
  {\apply{H_{#1}}{{#2},{#3}}}
\newcommand{\Multiple}%
  {\lambda}

\newcommand{\TopologicalSpace}%
  {X}
\newcommand{\MappingTorus}[1]
  {T_{#1}}
\newcommand{\Inner}[1]
  {\mathring{#1}}
\newcommand{\Point}%
  {\ast}
\newcommand{\Loop}%
  {\gamma}
\newcommand{\ContinuousMap}%
  {f}
  \newcommand{\ContinuousMapALT}%
  {g}
 \newcommand{\maps}%
  {\ensuremath{\text{maps}}}
\newcommand{\HomotopyGroupOfObject}[3]
  {\apply{\pi_{#1}}{{#2},{#3}}}
\newcommand{\HomotopyGroupOfPairObject}[4]
  {\apply{\pi_{#1}}{{#2},{#3},{#4}}}
\newcommand{\HomotopyGroupMorphism}[1] 
  {{#1}_{\ast}}
\newcommand{\EMSpace}[2]
  {\apply{K}{{#1},{#2}}}
\newcommand{\ClassifyingSpace}[1] 
  {B#1}
\newcommand{\UniversalCovering}[1] 
  {\widetilde{#1}}
\newcommand{\UniversalCoveringMap}[1] 
  {\widetilde{#1}}
\newcommand{\FundamentalCycle}[1]
  {\sigma_{#1}}

\newcommand{\SimplicialComplex}
  {X}
\newcommand{\AuxSimplicialComplex}
  {K}
\newcommand{\Subcomplex}
  {Y}
\newcommand{\AuxSubcomplex}
  {L}
\newcommand{\Simplex}[1]
  {\sigma_{#1}}
\newcommand{\Link}[2]
  {\apply{\text{Lk}_{#1}}{#2}}
\newcommand{\Star}[2]
  {\apply{\text{St}_{#1}}{#2}}
\newcommand{\BoundaryIndexSimplex}[2]
  {\apply{\partial_{#1}}{#2}}
\newcommand{\BoundarySimplex}%
  {\partial}
\newcommand{\StandardSimplex}[1]
  {\Delta_{#1}}
\newcommand{\vertex}
  {v}
\newcommand{\GeometricRealization}[1]
  {\left\lvert #1 \right\rvert}
\newcommand{\BoundHomotopy}
  {N}
\newcommand{\Horn}[2]
  {\Lambda^{#1}_{#2}}

%% file: Abstract.tex
\begin{abstract}
  In this note we develop a novel approach for proving the non-vanishing of
  bounded cohomology. This utilizes a splitting argument whose simplest form is
  as follows: Let $\Manifold$ denote an $n$-manifold of non-zero simplicial
  volume and $\Submanifold$ a codimension two submanifold of $\Manifold$, then
  one can
  conclude that the $n$-th bounded cohomology of the fundamental group of
  $\Manifold \setminus \Submanifold$ is non-zero. We then translate the
  existence of a complement with a given fundamental group into
  an easily accessible homology computation, which might be of independent
  interest.
\end{abstract}

%% file: Section/Introduction.tex
Ever since Gromovs seminal paper \cite{GromovBoundedCohomology}, bounded
cohomology has been known to be a powerful albeit hard to compute invariant
that has many different applications in geometry, topology, group theory and
dynamics. For computations, the key difference between ordinary cohomology and
bounded cohomology is that bounded cohomology does neither satisfy excision nor
Mayer-Vietoris. This is most famously encapsulated in the mysterious bounded
cohomology of free groups: the bounded cohomology of a single circle
vanishes, while the bounded cohomology of the wedge of two circles represents
the bounded cohomology of the free group $\FreeGroup{2}$. It is known that the
bounded cohomology of free groups does not vanish in degrees $2$ (see
\cite{BrooksSecondBC}) and degree $3$ (\cite{SomaThird}). So far nothing is
known about higher degrees.

In this paper we develop a novel approach to
establish non-vanishing results for bounded cohomology based on manifolds with
non-zero simplicial volume.
Soma's proof of the non-vanishing of the third bounded cohomology of free
groups in \cite{SomaThird} has used the uniform boundary condition introduced
by Matsumoto and Morita in \cite{MatsumotoMoritaBoundedCohomology}. Our
proposed strategy is founded on this definition as well. Paraphrased our
approach is based on the following Mayer-Vietoris principle (See
Proposition~\ref{prp:Splitting}) for a
precise statement): If an $n$-manifold of non-zero simplicial volume can
be split into two parts, one of which has vanishing simplicial volume, then the
other one has to have non-vanishing $n$-th bounded cohomology.

\paragraph{Codimension 2 subspaces}
In Section~\ref{scn:Codim} we will prove that the simplicial volume of regular
neighborhoods of nicely immersed codimension $2$-subspaces vanishes
(see Proposition~\ref{prp:TubNeighborhood} for the precise statement). Combined
with the aforementioned strategy, we obtain as a corollary the following theorem
(This represents a weakened version, see Theorem~\ref{thm:ImmersedKnotsEtc}
for a more general statement):
\input{Theorem/KnotsIntroduction.tex}
As a first application let us reprove that the third bounded cohomology of free
groups is non-zero. Indeed it is a well-known fact that every 3-manifold admits
an open book decomposition, hence there exists a link in every three-manifold
such that the complement fibers over a circle (in fact, since we only need a
single example, a hyperbolic dehn surgery on a fibering hyperbolic knot like
the figure eight knot suffices for what follows). Therefore the fundamental
group
$G$ of the complement, which has non-zero third bounded cohomology by
\autoref{thm:KnotGroup}, fits into a short exact sequence of the form
\[
  0
  \to
  \FreeGroup{k}
  \to
  G
  \to
  \Integers
  \to
  0
\]
From this it follows easily that the third bounded cohomology of free groups is
non-zero. Note that \cite{KastenholzOpenBooks} establishes that the very same
approach cannot work in degree $4$.

\paragraph{Translation to algebraic topology}
Finding submanifolds and computing the fundamental groups of their complements
is complicated. In \Cref{scn:Construction} we translate this into a lifting
problem, which is easily computable and might be of independent interest for
algebraic and geometric topologists. This has the following
shape: Fix a group
$\Group$, a rational homology class $\HomologyClass$ of $\Group$ and a
surjection $s\colon H \to \Group$. Then, there exists a certain Thom space
naturally associated to the surjection
$\ThomSpace{\TautologicalBundle{2,\Group,H}}$
together with a map
\[
  \ThomSpace{\TautologicalBundle{2,\Group,H}}
  \to
  \MappingCone{\ClassifyingSpace{\Group}}{\ClassifyingSpace{H}}
\]
where $\MappingCone{\ClassifyingSpace{\Group}}{\ClassifyingSpace{H}}$ denotes
the mapping cone of the map induced by $s$. Now $\HomologyClass$ admits a
representing manifold $\Manifold$ with fundamental group $\Group$ together with
a submanifold $\Submanifold$ such that the fundamental group of the complement
is isomorphic to $H$ if and only if the image of $\HomologyClass$ in
$\MappingCone{\ClassifyingSpace{\Group}}{\ClassifyingSpace{H}}$ lifts
rationally to $\ThomSpace{\TautologicalBundle{2,\Group,H}}$.

This allows us to prove the following, which might be of independent interest:
\begin{theorem}
  \label{thm:CentralizerDimension}
  Let $\Manifold$ denote a rationally essential manifold of dimension $n$, i.e.
  a manifold such that the canonical map
  $
  \Manifold
  \to
  \ClassifyingSpace{\HomotopyGroupOfObject{1}{\Manifold}{\ast}}
  $
  maps the rational fundamental class of $\Manifold$ to a rational non-zero
  group homology class, and $\Submanifold$ a codimension $2$ submanifold. Let
  $H$ denote the fundamental group of $\Manifold\setminus \Submanifold$ and let
  $s$ denote the induced surjection between $H$ and the fundamental group of
  $\Manifold$. Assume that both groups are torsion-free, then there exists at
  least one non-zero element in the kernel of
  $s$ with a centralizer that has rational cohomological dimension at least
  $n-1$.
\end{theorem}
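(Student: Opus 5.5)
The plan is to use the lifting criterion sketched in the introduction: the image of the rational fundamental class in the mapping cone $\MappingCone{\ClassifyingSpace{\Group}}{\ClassifyingSpace{H}}$ lifts rationally to $\ThomSpace{\TautologicalBundle{2,\Group,H}}$. Then I would show that such a lift is only possible if the kernel of $s$ contains an element whose centralizer has large rational cohomological dimension.

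\textbf{Step 1 (geometry).} Write $U$ for a tubular neighborhood of $\Submanifold$. Then $\Manifold = (\Manifold\setminus U)\cup U$, and the Pontryagin--Thom collapse $\Manifold \to \ThomSpace{\NormalBundle{\Manifold}{\Submanifold}}$ is compatible with the map to the cone. Rational essentiality says that $\FundamentalClass{\Manifold}$ maps to $\alpha\neq 0$ in $\HomologyOfSpaceObject{n}{\ClassifyingSpace{\Group}}{\Rationals}$.

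Next I would pass to the cone. Write $\Sigma^{2}$ for the double suspension and $\ThomSpace{\mathrm{const}}$ for the Thom space of the trivial $\Reals^{2}$-bundle. The image of $\alpha$ in $\HomologyOfSpaceObject{n}{\MappingCone{\ClassifyingSpace{\Group}}{\ClassifyingSpace{H}}}{\Rationals}$ has two possibilities.

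\begin{enumerate}[(a)]
\item The image is non-zero. Then it is hit by the Thom class image of $\FundamentalClass{\Submanifold}$.
\item The image is zero. Then $\alpha$ comes from $\HomologyOfSpaceObject{n}{\ClassifyingSpace{H}}{\Rationals}$.
\end{enumerate}

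In case (a), consider how the tautological Thom space is built. Each component of $\Submanifold$ has a meridian $m\in\ker s$. The normal circle bundle data near $\Submanifold$ has fundamental group mapping into the centralizer $C_H(m)$, because the boundary torus bundle commutes the meridian with the loops of $\Submanifold$ pushed off. So the Thom space over the relevant classifying data is essentially a Thom space over $\ClassifyingSpace{C_H(m)/\langle m\rangle}$, or over $\ClassifyingSpace{C_H(m)}$ for the circle bundle, by the Gysin sequence.

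A non-zero class of degree $n$ on the Thom space then gives, via the Thom isomorphism, a non-zero class of degree $n-2$ on the base. Through the Gysin sequence, $\HomologyOfSpaceObject{n-1}{\ClassifyingSpace{C_H(m)}}{\Rationals}\neq 0$, which forces the rational cohomological dimension of $C_H(m)$ to be at least $n-1$. Torsion-freeness guarantees that $m\neq 0$, since a meridian of finite order would be trivial, and that the rational homology behaves well.

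In case (b), a class of degree $n$ on $\ClassifyingSpace{H}$ alone would contradict nothing, so I need extra input. I would argue that the linking and meridian structure forces the image to be non-zero: the boundary map of the cone applied to the Thom part must realize the transfer of $\alpha$. Alternatively, case (b) would give a homology of dimension $n$ for $H$ itself, and I would try to reduce it to case (a) by a transfer argument over the $\Integers$-covers determined by the meridians.

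The main obstacle is making precise that the lift lands in the part of $\ThomSpace{\TautologicalBundle{2,\Group,H}}$ indexed by the centralizers of kernel elements. This is where the definition of the tautological bundle from the later section is needed. Ruling out case (b) is also delicate.
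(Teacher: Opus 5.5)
Your case (b) is a genuine gap, and it cannot be closed, because in that case the statement is false. Take $M=T^n$ with $n\ge 3$, and let $N$ be an unknotted $S^{n-2}$ inside a small ball.

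\begin{itemize}
\item Van Kampen along the separating $S^{n-1}$ gives $H\cong\mathbf{Z}^n*\langle t\rangle$, with $s$ the identity on $\mathbf{Z}^n$ and $s(t)=1$. Both groups are torsion-free and $T^n$ is rationally essential.
\item A nontrivial $h\in\ker s$ is not conjugate into $\mathbf{Z}^n$. By the structure of centralizers in free products, $C_H(h)$ is infinite cyclic: either conjugate to $\langle t\rangle$, or generated by a root of $h$. So every such centralizer has rational cohomological dimension $1<n-1$.
\item Here $BH\simeq T^n\vee S^1$, so $\alpha$ comes from $H_n(BH;\mathbf{Q})$. This is exactly your case (b), and no linking, meridian or transfer argument can force the image in the cone to be nonzero.
\end{itemize}

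With a closed hyperbolic $3$-manifold in place of $T^n$ one even gets a hyperbolic $H$.

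The paper's proof does not treat this case either. It shows that the image of $H_n(\mathrm{Th}(\gamma_{2,\Gamma,H});\mathbf{Q})$ in $H_n(\mathrm{Cone}(B\Gamma,BH);\mathbf{Q})$ vanishes unless some nontrivial $h\in\ker s$ has $\mathrm{cd}_{\mathbf{Q}}C_H(h)\ge n-1$. That contradicts the existence of $N$ only when $\alpha$ has nonzero image in the cone, and this is used tacitly. So the statement needs the extra hypothesis $\alpha\notin\operatorname{im}\bigl(H_n(BH;\mathbf{Q})\to H_n(B\Gamma;\mathbf{Q})\bigr)$. This is automatic, for instance, when $H_n(H;\mathbf{Q})=0$, and it puts you in case (a).

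In case (a) your argument is a local version of the paper's.

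\begin{itemize}
\item \textbf{The paper's route.} It works on the universal space. Restriction to the boundary identifies the relevant components of $\mathrm{Map}((D^2,S^1),(B\Gamma,BH))$, $SO(2)$-equivariantly, with the components $BC_H(h)$, $h\in\ker s$, of $\mathrm{Map}(S^1,BH)$. The homotopy quotient is $B(C_H(h)/\langle h\rangle)$ for $h\ne1$ and $\mathbf{CP}^\infty$ for $h=1$, and the $\mathbf{CP}^\infty$ part maps to the cone point. The Thom isomorphism then bounds the degrees that are hit.
\item \textbf{Your route.} You factor each $(U_i,\partial U_i)\to(B\Gamma,BH)$ through the disk-bundle pair over $B(C_H(m_i)/\langle m_i\rangle)$, using that the fiber of $\partial U_i$ is central. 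This avoids the mapping-space model and works.
\end{itemize}

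Two steps are wrong as written, however.

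First, torsion-freeness does not give $m_i\ne1$; a meridian can be trivial in $H$. It only gives $\langle m_i\rangle\cong\mathbf{Z}$ when $m_i\ne1$. You must show that components with trivial meridian contribute nothing. In that case $\partial U_i\to BH$ factors through $N_i$. So the map of pairs is homotopic, as a map of pairs, to one sending $U_i$ into $BH$, and its relative class vanishes. This is the role of the $\mathbf{CP}^\infty$ component in the paper.

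Second, the Gysin step does not give $H_{n-1}(C_H(m);\mathbf{Q})\ne0$. Write $Q=C_H(m)/\langle m\rangle$. The transfer $H_{n-2}(Q;\mathbf{Q})\to H_{n-1}(C_H(m);\mathbf{Q})$ kills the image of capping with the Euler class, so a nonzero class need not survive. What you actually obtain is $H_{n-2}(Q;\mathbf{Q})\ne0$, hence $\mathrm{cd}_{\mathbf{Q}}Q\ge n-2$. You then need $\mathrm{cd}_{\mathbf{Q}}C_H(m)\ge\mathrm{cd}_{\mathbf{Q}}Q+1$ for a central infinite cyclic subgroup, which is the same dimension shift the paper invokes.
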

As an immediate corollary we obtain:
\begin{corollary}
  Let $\Manifold$ denote a rationally essential manifold of dimension $n$ and
  suppose that there exists a codimension two-submanifold $\Submanifold$ in
  $\Manifold$ such that the complement has fundamental group $H$,
  then the rational cohomological dimension of $H$ is at least $n-1$.

  Additionally, if $n$ is at least three, then $H$ is not hyperbolic. In
  particular it can not be a free group.
\end{corollary}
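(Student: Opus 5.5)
The plan is to deduce everything from Theorem~\ref{thm:CentralizerDimension}, together with two standard facts. The first is that rational cohomological dimension is monotone under passing to subgroups. The second is that in a hyperbolic group the centralizer of an element of infinite order is virtually cyclic.

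\emph{Dimension bound.} Let $\Manifold$ and $\Submanifold$ be as in the statement, $H=\HomotopyGroupOfObject{1}{\Manifold\setminus\Submanifold}{\ast}$, and $s\colon H\to\HomotopyGroupOfObject{1}{\Manifold}{\ast}$ the induced surjection.

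1. Assume first that both groups are torsion-free. Theorem~\ref{thm:CentralizerDimension} gives a non-trivial $g\in\ker s$ whose centralizer $C_H(g)\le H$ has rational cohomological dimension at least $n-1$.
2. Any projective $\Rationals H$-resolution of $\Rationals$ restricts to a projective $\Rationals C_H(g)$-resolution. Hence $\operatorname{cd}_{\Rationals}H\ge\operatorname{cd}_{\Rationals}C_H(g)\ge n-1$.

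\emph{Non-hyperbolicity.} Suppose $n\ge 3$ and $H$ were hyperbolic.

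1. Take $g$ as above. Since $H$ is torsion-free, $g$ has infinite order.
2. In a hyperbolic group, the centralizer of an infinite-order element is virtually cyclic. A virtually cyclic group has rational cohomological dimension at most $1$, because finite-index passage does not change $\operatorname{cd}_{\Rationals}$ and $\Integers$ has dimension $1$.
3. This contradicts $\operatorname{cd}_{\Rationals}C_H(g)\ge n-1\ge 2$.
4. Free groups are torsion-free and hyperbolic, so $H$ cannot be free.

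\emph{Main obstacle: removing torsion-freeness.} The corollary is stated without a torsion-freeness hypothesis, while Theorem~\ref{thm:CentralizerDimension} assumes it.

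For the ``in particular'' clause there is no problem in the torsion-free case, which covers free $H$. If $H$ is hyperbolic with torsion, it has a torsion-free subgroup of finite index. I would pass to the corresponding finite cover of $\Manifold\setminus\Submanifold$, extend it over $\Submanifold$ as a branched cover of $\Manifold$, and check that rational essentiality survives. This last point should hold because transfer is injective on rational homology.

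For the dimension bound in general, I would first try to reopen the proof of Theorem~\ref{thm:CentralizerDimension}. The element it produces should be a meridian-type element detected by a rational lift to $\ThomSpace{\TautologicalBundle{2,\Group,H}}$. Finite-order elements are rationally invisible, so the argument should only need that this element has infinite order, not that the whole group is torsion-free.

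If that fails, I would fall back on the direct bound from the classifying map. The fundamental class of $\Manifold$ maps to a non-zero class in $\HomologyOfGroupObject{n}{\HomotopyGroupOfObject{1}{\Manifold}{\ast}}{\Rationals}$. Pushing this class through the mapping cone $\MappingCone{\ClassifyingSpace{\Group}}{\ClassifyingSpace{H}}$ would then force a non-zero degree-$(n-1)$ rational class coming from $H$ (for instance via a Gysin/meridian argument). That would give $\operatorname{cd}_{\Rationals}H\ge n-1$ directly.
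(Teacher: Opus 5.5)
Your deduction is what the paper means by calling this an immediate corollary of Theorem~\ref{thm:CentralizerDimension}: subgroup monotonicity of $\operatorname{cd}_{\Rationals}$, plus virtually cyclic centralizers in hyperbolic groups. However, torsion, the obstacle you single out, is not the real one. Theorem~\ref{thm:CentralizerDimension} is false as stated, and so is the second assertion of the corollary.

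\emph{Counterexample.} Let $\Manifold$ be a closed hyperbolic $3$-manifold. It is aspherical, hence rationally essential, and $\Group=\HomotopyGroupOfObject{1}{\Manifold}{\ast}$ is torsion-free and hyperbolic. Let $\Submanifold$ be an unknotted circle in a small ball $B\subset\Manifold$. Van Kampen along the $2$-sphere $\Boundary{B}$ gives $H\cong\Group\ast\Integers$, where $s$ is the identity on $\Group$ and kills the meridian $t$. This $H$ is torsion-free and, as a free product of hyperbolic groups, hyperbolic, although $n=3$.

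Moreover, $\ker s$ is the normal closure of $t$, and none of its non-trivial elements is conjugate into $\Group$. By the structure of centralizers in free products, every non-trivial element of $\ker s$ therefore has infinite cyclic centralizer, of rational cohomological dimension $1<n-1$. An unknotted $\Sphere{n-2}$ in a ball of a closed hyperbolic $n$-manifold gives the same for every $n\geq 3$.

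\emph{Consequences for your argument.} The element $g$ that your non-hyperbolicity argument starts from need not exist, and no handling of torsion can save that assertion. Your derivation of ``not free'' falls with it; for $n\geq 3$, non-freeness would have to come from the first assertion instead.

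\emph{Where the theorem's proof breaks.} It only shows that $\ThomSpace{\TautologicalBundle{2,\Group,H}}\to\MappingCone{\ClassifyingSpace{\Group}}{\ClassifyingSpace{H}}$ vanishes on rational homology in high degrees. This constrains the image $\HomologyClass$ of $\FundamentalClass{\Manifold}$ only if $\HomologyClass$ survives in the mapping cone, i.e.\ if $\HomologyClass\notin s_*\HomologyOfGroupObject{n}{H}{\Rationals}$. In the example $s$ splits, so this fails.

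The first assertion is consistent with the example, since $\operatorname{cd}_{\Rationals}(\Group\ast\Integers)=3$. An argument along the paper's lines must first split off the case $\HomologyClass\in s_*\HomologyOfGroupObject{n}{H}{\Rationals}$, where $\operatorname{cd}_{\Rationals}H\geq n$ is immediate. Only in the complementary case does the Thom-space computation say anything, and there it still has to be checked carefully.

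Your fallback does not supply this. The natural candidate for a non-zero degree-$(n-1)$ class from the meridian/Gysin picture is the image of $\FundamentalClass{\Boundary{\TubularNeighborhood{\Submanifold}}}$. It vanishes in $\HomologyOfGroupObject{n-1}{H}{\Rationals}$, because $\Boundary{\TubularNeighborhood{\Submanifold}}$ bounds $\Manifold\setminus\Inner{\TubularNeighborhood{\Submanifold}}$. Likewise, by exactness, the image of $\HomologyClass$ in the mapping cone maps to zero in $\HomologyOfGroupObject{n-1}{H}{\Rationals}$. Concretely, if $\Manifold$ in the example is a hyperbolic rational homology sphere, then $\HomologyOfGroupObject{2}{H}{\Rationals}=0$ while $\operatorname{cd}_{\Rationals}H=3$.
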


Finally at the end of \Cref{scn:Construction}, we sketch how to incorporate
more general singularities, as allowed in the most general version of our
approach, into this algebraic topological framework. We want to emphasize here
that the
computations for the singular case are tedious, but not complicated, and have
not yet been carried out. All of this provides a potential straight forward
route to prove the non-vanishing of the bounded cohomology of free groups.
Additionally, since the bounded cohomology of many groups embeds into the the
bounded cohomology of free groups (e.g. surface groups, many free
products and as established recently in \cite{3ManifoldGroups} all 3-manifold
groups )
there is a lot of freedom in choosing the $H$ for this approach.

\paragraph*{A personal note:} I want to thank Zixiang Zhou for spotting a
critical gap in a predecessor of this paper, where I wrongfully claimed that I
could prove the non-vanishing of the fourth bounded cohomology of free groups.
This paper grew out of my attempt to fix this gap and flesh out the overall
strategy. I also want to thank Francesco Fournier-Facio, Marco Moraschini and
Roberto Frigerio for many helpful remarks on this predecessor paper that have
found their way into this paper as well.
\newline
Finally, I want to emphasize that I am still confident that the computations
described at the end of \Cref{scn:Construction} are feasible and the only
reason I did not finish them, is me leaving pure math academia. Hopefully,
someone else will be able to see this approach to its conclusion.

%% file: Theorem/KnotsIntroduction.tex
\begin{theorem}
\label{thm:KnotGroup}
  Let $\Manifold$ denote an $n$-dimensional manifold of non-zero simplicial
  volume and let $\Submanifold \subset \Manifold$ denote a submanifold of
  codimension two, then the $n$-th bounded cohomology of $\Manifold \setminus
  \Submanifold$ is non-zero.
\end{theorem}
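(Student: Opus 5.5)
We prove this by combining the splitting principle (Proposition~\ref{prp:Splitting}) with the vanishing of simplicial volume of regular neighborhoods of codimension two subspaces (Proposition~\ref{prp:TubNeighborhood}). We may assume $\Manifold$ is closed and oriented, passing to the orientation double cover if necessary; simplicial volume is multiplicative under finite covers, and the preimage of $\Submanifold$ is again a codimension two submanifold. Let $\TubularNeighborhood{\Submanifold}$ be a closed tubular (regular) neighborhood of $\Submanifold$ and $C = \Manifold \setminus \Inner{\TubularNeighborhood{\Submanifold}}$ the closed complement. Then $C$ is a compact manifold with boundary, homotopy equivalent to $\Manifold \setminus \Submanifold$. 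Since bounded cohomology of a space is isometric to that of its fundamental group (Gromov's mapping theorem), it suffices to show $\BoundedCohomologyOfSpaceObject{n}{C}{\Reals} \neq 0$.

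\textbf{Step 1: the neighborhood has vanishing simplicial volume.} We apply Proposition~\ref{prp:TubNeighborhood} to get $\SimplicialVolume{\TubularNeighborhood{\Submanifold}, \Boundary{\TubularNeighborhood{\Submanifold}}} = 0$. Heuristically, $\TubularNeighborhood{\Submanifold}$ is a $\Ball{2}$-bundle over $\Submanifold$. Its relative fundamental class can be represented by cycles of arbitrarily small $\ellone$-norm whose boundaries also have small norm, by "winding" in the circle fibre direction, as for $\Submanifold \times \Ball{2}$. This sharper form, with controlled boundary norm, is what the splitting proposition should require.

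\textbf{Step 2: splitting.} The decomposition $\Manifold = C \cup_{\Boundary{C}} \TubularNeighborhood{\Submanifold}$ has common boundary $\Boundary{C} = \Boundary{\TubularNeighborhood{\Submanifold}}$, the circle bundle over $\Submanifold$. Proposition~\ref{prp:Splitting} then says: if $\SimplicialVolume{\Manifold}>0$ and one piece has vanishing simplicial volume (in the appropriate sense), the other piece has nonvanishing degree-$n$ bounded cohomology.

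The mechanism I would verify runs as follows. Suppose $\BoundedCohomologyOfSpaceObject{n}{C}{\Reals}=0$. By duality in the style of Matsumoto--Morita, this forces a uniform boundary condition in degree $n-1$ on $C$, that is, for $(n-1)$-boundaries in $C$. Now take a fundamental cycle of $\Manifold$. Write it, up to small error, as a relative cycle in $\TubularNeighborhood{\Submanifold}$ plus a chain $c$ on $C$ with $\partial c = -\partial(\text{neighborhood piece})$. The neighborhood boundary piece has small norm, so by the uniform boundary condition $\partial c$ bounds a chain of small norm in $C$. Then $c$ minus that filling is a cycle of $C$ of degree $n$, hence homologous to a small multiple of nothing: $H_n(C)=0$ since $C$ has nonempty boundary. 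Altogether this represents $\FundamentalClass{\Manifold}$ by cycles of arbitrarily small norm, contradicting $\SimplicialVolume{\Manifold}>0$.

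\textbf{Main obstacle.} The delicate part is the norm bookkeeping in Step 2. The cut along $\Boundary{C}$ must be done so that the boundary chain in $C$ really has small norm, and not merely the interior piece. This is exactly why Step 1 needs the strong form: relative fundamental cycles of $\TubularNeighborhood{\Submanifold}$ whose boundary chains on $\Boundary{\TubularNeighborhood{\Submanifold}}$ have norm tending to zero. I would first arrange this with the amenable circle fibres, using the $\Sphere{1}$-action or a covering/transfer argument along the fibres. I would then check that the uniform boundary condition transfers along the inclusion $\Boundary{C}\to C$ with constants independent of the cycle.
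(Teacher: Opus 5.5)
Your route is the paper's. Theorem~\ref{thm:KnotGroup} is the embedded case of Theorem~\ref{thm:ImmersedKnotsEtc}, which is proved by applying Proposition~\ref{prp:Splitting} (Corollary~\ref{cor:AcyclicUBC} plus Lemma~\ref{lem:UBCGluing}) to $M = C\cup_{\partial C} U_N$, once $\|U_N,\partial U_N\|=0$ is known. Several details in your write-up need fixing.

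\textbf{Step 1 cites the wrong result.} Proposition~\ref{prp:TubNeighborhood} is a statement about $4$-manifolds. For general $n$ you want Lemma~\ref{lem:FiberTransfer} applied to the normal $D^2$-bundle: the fibre $D^2$ and its boundary $S^1$ are connected with amenable fundamental groups. Equivalently, use Proposition~\ref{prp:GeneralizedTubNeighborhood} with $X_{n-4}=\emptyset$.

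\textbf{The ``main obstacle'' is automatic.} The sharper form with controlled boundary norm needs no extra work, since $\|\partial u\|\le (n+1)\|u\|$ for any $n$-chain $u$. Moreover, $\partial u$ is a fundamental cycle of $\partial C$, hence a boundary in $C$. So UBC on $C$ applies to it directly, and no transfer of UBC along $\partial C\to C$ is needed.

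\textbf{Step 2 should build the cycle, not decompose one.} You cannot take a given fundamental cycle of $M$ and ``write it, up to small error'' as a neighborhood piece plus a $C$-piece. An arbitrary cycle does not split along $\partial C$ without subdivision, which gives no norm control, and its $U_N$-part would not be small anyway. Instead:
\begin{enumerate}[(i)]
\item take a relative fundamental cycle $u$ of $(U_N,\partial U_N)$ of small norm;
\item let $c$ be a UBC-filling of $\partial u$ in $C$ with $\|c\|\le K(n+1)\|u\|$;
\item observe that $u-c$ represents $[M]$. Its image in $H_n(M,C)\cong H_n(U_N,\partial U_N)$ is the relative fundamental class, and $H_n(C)=0$ makes $H_n(M)\to H_n(M,C)$ injective.
\end{enumerate}
This is where your observation $H_n(C)=0$ actually belongs; it is the content of Lemma~\ref{lem:BoundingCycle} and the last step of Lemma~\ref{lem:UBCGluing}. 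With it, your detour through the cycle $c-c'$ becomes unnecessary.

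\textbf{Drop the orientation double cover.} That reduction only yields non-vanishing of $H^n_b$ for the double cover of $M\setminus N$. This does not descend to $M\setminus N$: pullback along a finite cover, i.e.\ restriction to a finite-index subgroup, is injective, which is the wrong direction. As in the paper, treat ``closed and oriented'' as tacit hypotheses.
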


%% file: Section/strategy.tex
In this section we establish a connection between simplicial volume, bounded
cohomology and splitting a manifold into two parts.
All cohomology and homology groups will have real coefficients, additionally
every occurring fundamental cycle will be a real fundamental cycle.

The approach is based on the following definition by Matsumoto and Morita.
\input{Definition/UBC.tex}
The uniform boundary condition is closely tied to bounded cohomology via the
following proposition:
\input{Proposition/MatsumotoMorita.tex}
Note that by Gromovs Mapping Theorem (proven in \cite{GromovBoundedCohomology},
see also Theorem~5.9 in \cite{FrigerioBook}) the bounded cohomology of a
connected
topological space agrees with the bounded cohomology of its fundamental group.
Hence we obtain the following corollary:
\input{Corollary/AcyclicImpliesUBC.tex}
In order to establish the aforementioned Mayer-Vietoris type proposition,
we
will be interested in constructing fundamental cycles of manifolds using
the
uniform boundary condition. For this we need the
following lemma:
\input{Lemma/BoundingCycle.tex}
Using this lemma together with the uniform boundary condition yields the
following elementary observation about simplicial volume:
\input{Lemma/UBCimpliesSimplicialVolume.tex}
Combining this lemma with Theorem~\ref{prp:MatsumotoMorita} yields the
following proposition, which encapsulates the
main strategy of this paper.
\input{Proposition/NonZeroBoundedCohomology.tex}

%% file: Definition/UBC.tex
\begin{definition}[Definition~2.1 in
\cite{MatsumotoMoritaBoundedCohomology}]
  Let $\TopologicalSpace$ denote a topological space. We say
  $\TopologicalSpace$ satisfies the $q$-uniform boundary condition ($q$-UBC) if
  there exists a constant $K$ such that for every closed singular $q$-chain
  with real coefficients $\sigma$ on $\TopologicalSpace$ that is a boundary,
  there exists a
  $q+1$-chain $\rho$ on $\TopologicalSpace$ such that $\partial \rho = \sigma$
  and $\Norm{\rho}\leq K \Norm{\sigma}$.
\end{definition}

%% file: Proposition/MatsumotoMorita.tex
\begin{theoremnum}[Theorem~2.8 in \cite{MatsumotoMoritaBoundedCohomology}]
\label{prp:MatsumotoMorita}
  A topological space $\TopologicalSpace$ satisfies the $q$-UBC if
  and only if the comparison map
  \[
    \BoundedCohomologyOfSpaceObject{q+1}{\TopologicalSpace}{\Reals}
    \to
    \CohomologyOfSpaceObject{q+1}{\TopologicalSpace}{\Reals}
  \]
  is injective.
\end{theoremnum}

%% file: Corollary/AcyclicImpliesUBC.tex
\begin{corollary}
\label{cor:AcyclicUBC}
  Let $\TopologicalSpace$ denote a connected topological space such that
  $
    \BoundedCohomologyOfSpaceObject
      {q+1}
      {\HomotopyGroupOfObject{1}{\TopologicalSpace}{\ast}}
      {\Reals}
  $
  vanishes, then $\TopologicalSpace$ satisfies the $q$-UBC.
\end{corollary}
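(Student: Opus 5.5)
This follows by chaining Gromov's Mapping Theorem with Theorem~\ref{prp:MatsumotoMorita}. Since $\TopologicalSpace$ is connected, the classifying map
\[
  c\colon \TopologicalSpace \to \ClassifyingSpace{\HomotopyGroupOfObject{1}{\TopologicalSpace}{\ast}}
\]
induces an isometric isomorphism
\[
  \BoundedCohomologyOfSpaceObject{q+1}{\HomotopyGroupOfObject{1}{\TopologicalSpace}{\ast}}{\Reals}
  \cong
  \BoundedCohomologyOfSpaceObject{q+1}{\TopologicalSpace}{\Reals}.
\]
This is Gromov's Mapping Theorem as quoted above, together with the standard identification of group bounded cohomology with the bounded cohomology of the classifying space. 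By hypothesis the left-hand side vanishes, so $\BoundedCohomologyOfSpaceObject{q+1}{\TopologicalSpace}{\Reals}=0$.

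Once this group is zero, the comparison map
\[
  \BoundedCohomologyOfSpaceObject{q+1}{\TopologicalSpace}{\Reals}
  \to
  \CohomologyOfSpaceObject{q+1}{\TopologicalSpace}{\Reals}
\]
has zero domain and is therefore injective. The "if" direction of Theorem~\ref{prp:MatsumotoMorita} then shows that $\TopologicalSpace$ satisfies the $q$-UBC.

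No step here is hard. The only point needing care is that Gromov's Mapping Theorem applies to an arbitrary connected space, not just to CW complexes or countable ones. The statement cited above (Theorem~5.9 in \cite{FrigerioBook}) is formulated for path-connected spaces, and Matsumoto--Morita's theorem is stated for arbitrary topological spaces, so nothing further is required. Connectedness should be read as path-connectedness, which is the standing convention whenever fundamental groups are used.
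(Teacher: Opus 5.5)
Your proposal is correct and matches the paper's argument: Gromov's Mapping Theorem identifies $H^{q+1}_{\mathrm{b}}(\TopologicalSpace;\Reals)$ with the vanishing group $H^{q+1}_{\mathrm{b}}(\pi_1(\TopologicalSpace,\ast);\Reals)$, so the comparison map has zero domain, and the ``if'' direction of Theorem~\ref{prp:MatsumotoMorita} then gives the $q$-UBC. Your caveat about the generality of the Mapping Theorem does not affect anything in the paper, since the corollary is only applied to spaces homotopy equivalent to compact manifolds, and there every standard version of the Mapping Theorem applies.
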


%% file: Lemma/BoundingCycle.tex
\begin{lemma}
\label{lem:BoundingCycle}
  Let $\NullBordism$ denote an oriented $n+1$-dimensional compact manifold with
  boundary $\Manifold$ and let $\FundamentalCycle{\Manifold}$ denote a
  fundamental cycle of $\Manifold$. Then any $n+1$-chain
  $\FundamentalCycle{\NullBordism,\Manifold}$ on $\NullBordism$ that bounds
  $\FundamentalCycle{\Manifold}$ represents the fundamental class of
  $(\NullBordism,\Manifold)$.
\end{lemma}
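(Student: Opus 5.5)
The plan is to reduce the statement to the long exact sequence of the pair $(\NullBordism,\Manifold)$ in homology with real coefficients. We use the standard facts that $\HomologyOfSpacePairObject{n+1}{\NullBordism}{\Manifold} \cong \Reals$ on each component of $\NullBordism$, generated by the relative fundamental class, and that the connecting homomorphism
\[
  \partial\colon \HomologyOfSpacePairObject{n+1}{\NullBordism}{\Manifold} \to \apply{H_n}{\Manifold}
\]
sends $\FundamentalClass{\NullBordism,\Manifold}$ to $\FundamentalClass{\Manifold}$. Here $\Manifold$ is given the boundary orientation, and it is implicit in the statement that $\FundamentalCycle{\Manifold}$ represents exactly this class.

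First I note that a chain $\FundamentalCycle{\NullBordism,\Manifold}$ with $\partial \FundamentalCycle{\NullBordism,\Manifold} = \FundamentalCycle{\Manifold}$ is a relative cycle, since its boundary lies in $\Manifold$. So it defines a class $c \in \HomologyOfSpacePairObject{n+1}{\NullBordism}{\Manifold}$. By the explicit description of the connecting map, $\partial c$ is the class of $\partial \FundamentalCycle{\NullBordism,\Manifold} = \FundamentalCycle{\Manifold}$, which is $\FundamentalClass{\Manifold}$. Hence
\[
  \partial\bigl(c - \FundamentalClass{\NullBordism,\Manifold}\bigr) = 0 .
\]

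It then remains to show that $\partial$ is injective in this degree. By exactness, its kernel is the image of $\apply{H_{n+1}}{\NullBordism}$. This group vanishes: each component of $\NullBordism$ is a compact $(n+1)$-manifold with non-empty boundary, so its top homology is zero, for instance by Lefschetz duality $\apply{H_{n+1}}{\NullBordism}\cong \apply{H^0}{\NullBordism,\Manifold}=0$. Therefore $c=\FundamentalClass{\NullBordism,\Manifold}$.

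The only delicate point is the bookkeeping around components and orientations. If some component of $\NullBordism$ had empty boundary, the claim would fail, so I would interpret "boundary $\Manifold$" as saying every component meets $\Manifold$; this is exactly what makes $\apply{H_{n+1}}{\NullBordism}$ vanish. I also need the sign convention relating the connecting map to boundary orientations to be the one under which $\FundamentalCycle{\Manifold}$ is the boundary of a relative fundamental cycle. I would settle this by citing the standard fact and not re-deriving it. Everything else is formal.
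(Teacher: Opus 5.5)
Your proof is correct and follows the same route as the paper: you view the bounding chain as a relative cycle and use that the connecting homomorphism of the pair $(\NullBordism,\Manifold)$ sends its class to $\FundamentalClass{\Manifold}$. You also make explicit that this connecting map is injective because $H_{n+1}(\NullBordism;\Reals)=0$, which needs every component of $\NullBordism$ to meet $\Manifold$; the paper's proof leaves this step implicit.
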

\begin{proof}
  The boundary morphism of the long exact sequence of the pair
  $(\NullBordism,\Manifold)$ maps an $(n+1)$-chain on the pair
  $(\NullBordism,\Manifold)$ to its boundary in $\Manifold$. Hence the homology
  class represented by $\FundamentalCycle{\NullBordism,\Manifold}$ maps to the
  class represented by $\FundamentalCycle{\Manifold}$ i.e. the fundamental
  class of $\Manifold$. Therefore it represents the fundamental class of
  $(\NullBordism,\Manifold)$.
\end{proof}

%% file: Lemma/UBCimpliesSimplicialVolume.tex
\begin{lemma}
\label{lem:UBCGluing}
  Suppose a closed connected $n$-dimensional manifold $\Manifold$ splits as
  $
    \Manifold
    =
    \Manifold_1
    \cup_{\Submanifold}
    \Manifold_2
  $
  with $\Manifold_1$ and $\Manifold_2$ being two codimension $0$ submanifolds
  with boundary, that intersect in their common boundary
  $\Submanifold$. Suppose further that
  $\SimplicialVolume{\Manifold_2,\Submanifold}$
  vanishes and $\Manifold_1$ satisfies $(n-1)$-UBC, then the
  simplicial volume of $\Manifold$ vanishes.
\end{lemma}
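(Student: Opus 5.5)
Fix $\varepsilon>0$. The plan is to construct a fundamental cycle of $\Manifold$ whose $\ellone$-norm is bounded by a constant multiple of $\varepsilon$, with the constant independent of $\varepsilon$. We may assume $\Manifold$ is oriented; otherwise we pass to the orientation double cover, where the hypotheses are preserved: UBC passes to finite covers of $\Manifold_1$, the relative simplicial volume of $(\Manifold_2,\Submanifold)$ is multiplicative under finite covers, and the simplicial volume of $\Manifold$ vanishes iff that of the cover does. We may also assume $\Manifold_1$ and $\Manifold_2$ are connected, or argue componentwise. Let $K$ be the UBC constant of $\Manifold_1$ in degree $n-1$.

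\textbf{Step 1 (small relative cycle on $\Manifold_2$).} Since $\SimplicialVolume{\Manifold_2,\Submanifold}=0$, choose a relative fundamental cycle $c_2$ of $(\Manifold_2,\Submanifold)$ with $\Norm{c_2}\le\varepsilon$. Its boundary $z=\partial c_2$ is a cycle in $\Submanifold$ representing $\FundamentalClass{\Submanifold}$ with the orientation induced from $\Manifold_2$, and $\Norm{z}\le (n+1)\varepsilon$.

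\textbf{Step 2 (filling with UBC).} View $-z$ as a cycle in $\Manifold_1$. It represents the fundamental class of $\Submanifold$ with the boundary orientation induced from $\Manifold_1$, which is the boundary of the relative class of $(\Manifold_1,\Submanifold)$. The key point is that $-z$ is a boundary in $\Manifold_1$: under the connecting map $H_n(\Manifold_1,\Submanifold)\to H_{n-1}(\Submanifold)$ the fundamental class of the pair goes to $\FundamentalClass{\Submanifold}$, so exactness shows $\FundamentalClass{\Submanifold}$ maps to zero in $H_{n-1}(\Manifold_1)$. The $(n-1)$-UBC then gives an $n$-chain $c_1$ on $\Manifold_1$ with $\partial c_1=-z$ and $\Norm{c_1}\le K(n+1)\varepsilon$. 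By Lemma~\ref{lem:BoundingCycle}, applied to $\Manifold_1$ with the fundamental cycle $-z$ of its boundary, $c_1$ represents the relative fundamental class of $(\Manifold_1,\Submanifold)$.

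\textbf{Step 3 (gluing).} Set $c=c_1+c_2$, an $n$-chain on $\Manifold$. Then $\partial c=-z+z=0$, so $c$ is a cycle. To check that $c$ represents $\FundamentalClass{\Manifold}$, test it locally. For a point $x$ in the interior of $\Manifold_1$, the image of $c$ in $H_n(\Manifold,\Manifold\setminus x)$ equals that of $c_1$ in $H_n(\Manifold_1,\Manifold_1\setminus x)\cong H_n(\Manifold,\Manifold\setminus x)$ by excision, because $c_2$ is supported away from $x$ after the excision identification. Since $c_1$ is a relative fundamental cycle, this is the local orientation class, and the same argument works on $\Manifold_2$. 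A cycle with the correct local degree at one point of a connected closed oriented manifold is a fundamental cycle. Hence
\[
\SimplicialVolume{\Manifold}\le\Norm{c}\le (1+K(n+1))\varepsilon ,
\]
and letting $\varepsilon\to 0$ gives the claim.

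\textbf{Main obstacle.} The delicate point is the orientation bookkeeping in Steps 2 and 3. We need $-z$ to be exactly a fundamental cycle of $\partial\Manifold_1$, so that Lemma~\ref{lem:BoundingCycle} applies with the right sign, and we need the local-degree check to go through even though $c_2$ may have simplices meeting $\Submanifold$. Both are handled by choosing $x$ away from $\Submanifold$ and using the opposite induced orientations on the common boundary.
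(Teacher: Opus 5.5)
Your argument is correct and is essentially the paper's proof: take a small relative fundamental cycle of $(\Manifold_2,\Submanifold)$, fill its boundary in $\Manifold_1$ via $(n-1)$-UBC, identify the filling with Lemma~\ref{lem:BoundingCycle}, and glue. You also fill in two steps the paper only asserts. First, you check that $-z$ is null-homologous in $\Manifold_1$, which is needed before UBC can be invoked. Second, you verify by a local-degree argument that $c_1+c_2$ is a fundamental cycle.

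The one thing to drop is the opening reduction to the orientable case. The claim that UBC passes to finite covers is not justified. The transfer argument only shows that UBC of a finite cover implies UBC of the base. The direction you need is filling $\iota$-anti-invariant cycles on the orientation double cover $\widetilde{\Manifold}_1$. That amounts to a uniform boundary condition with coefficients twisted by the orientation character, which the real $(n-1)$-UBC hypothesis does not give you. The reduction is also unnecessary: the paper works with real fundamental cycles throughout, so it tacitly assumes $\Manifold$ is oriented.
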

\begin{proof}
  Let $(\FundamentalCycle{\Manifold_2}^{k})_{k\in\NaturalNumbers}$ denote a
  sequence of fundamental cycles of $(\Manifold_2,\Boundary{\Manifold_2})$ such
  that
  $\Norm{\FundamentalCycle{\Manifold_2}^{k}}$ tends to zero as $k$ tends to
  infinity.
  Let $K_1$ denote the UBC-constant in dimension $n-1$ of $\Manifold_1$. Now by
  $(n-1)$-UBC there exist an $n$-chain
  $\FundamentalCycle{\Manifold_1}^{k}$ on
  $\Manifold_1$ that bounds $\Boundary{\FundamentalCycle{\Manifold_2}^k}$
  and such that
  \[
    \Norm{\FundamentalCycle{\Manifold_1}^{k}}
    \leq
    K_1
    \Norm{\Boundary{\FundamentalCycle{\Manifold_2}^{k}}}
    \leq
    (n+1)K_1\Norm{\FundamentalCycle{\Manifold_2}^{k}}
  \]
  By Lemma~\ref{lem:BoundingCycle},
  $\FundamentalCycle{\Manifold_1}^{k}$ represents the fundamental
  class of $(\Manifold_1,\Boundary{\Manifold_1})$.
  By construction, the difference of $\FundamentalCycle{\Manifold_1}^{k}$ and
  $\FundamentalCycle{\Manifold_2}^{k}$ is a cycle and therefore represents the
  fundamental class of $\Manifold$. Since the norm of
  $\FundamentalCycle{\Manifold_1}^{k}$ tends to zero, the norm of these
  differences tends to zero as well.
\end{proof}

%% file: Proposition/NonZeroBoundedCohomology.tex
\begin{proposition}
\label{prp:Splitting}
  Let $\Manifold$ denote a closed manifold of dimension $n$ with non-zero
  simplicial volume.
  Suppose further that there exists a codimension $0$ submanifold with boundary
  $\Manifold'\subset \Manifold$ such that the simplicial volume of
  $(\Manifold',\Boundary{\Manifold'})$ vanishes. Then the $n$-th bounded
  cohomology of
  $\HomotopyGroupOfObject{1}{\Manifold\setminus\Manifold'}{\ast}$ is non-zero.
\end{proposition}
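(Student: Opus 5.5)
We argue by contradiction, combining Lemma~\ref{lem:UBCGluing} with Corollary~\ref{cor:AcyclicUBC}. Set $\Manifold_2 = \Manifold'$ and let $\Manifold_1$ be the closure of $\Manifold \setminus \Manifold'$. This is a codimension $0$ submanifold with boundary, and $\Manifold_1 \cap \Manifold_2 = \Boundary{\Manifold'} = \Submanifold$, so $\Manifold = \Manifold_1 \cup_{\Submanifold} \Manifold_2$ is a splitting as in Lemma~\ref{lem:UBCGluing}. Since $\Manifold'$ has a boundary collar, $\Manifold_1$ deformation retracts onto (or is homotopy equivalent to) $\Manifold\setminus\Manifold'$. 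Hence the two have isomorphic fundamental groups and isomorphic bounded cohomology. Suppose $\BoundedCohomologyOfSpaceObject{n}{\HomotopyGroupOfObject{1}{\Manifold\setminus\Manifold'}{\ast}}{\Reals} = 0$.

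First I treat the case where $\Manifold_1$ is connected. Corollary~\ref{cor:AcyclicUBC} with $q = n-1$ gives that $\Manifold_1$ satisfies $(n-1)$-UBC. By hypothesis $\SimplicialVolume{\Manifold_2,\Submanifold} = 0$. Lemma~\ref{lem:UBCGluing} then gives $\SimplicialVolume{\Manifold} = 0$, which contradicts the assumption that $\Manifold$ has non-zero simplicial volume. Hence the $n$-th bounded cohomology is non-zero.

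The main obstacle is bookkeeping about connectivity, since the statement implicitly assumes the complement is connected (so that a single fundamental group makes sense). If $\Manifold_1$ has several components, I would read the hypothesis componentwise: vanishing bounded cohomology for every component gives UBC on each component. A uniform constant is then obtained by taking the maximum over the finitely many components, since the boundary problem decomposes along components and the $\ell^1$-norm is additive over them. With that, Lemma~\ref{lem:UBCGluing} applies verbatim.

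I also need to check the small point that $\Manifold$ is connected, as Lemma~\ref{lem:UBCGluing} requires. If it is not, I would restrict to a component with non-zero simplicial volume. Finally, I need that UBC and bounded cohomology of $\Manifold_1$ agree with those of $\Manifold\setminus\Manifold'$. This holds because homotopy equivalences induce isometric isomorphisms in bounded cohomology, and Theorem~\ref{prp:MatsumotoMorita} characterizes UBC purely via the comparison map. That characterization makes UBC a homotopy invariant, so it transfers across the collar.
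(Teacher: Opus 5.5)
Your argument is correct and is the one the paper intends: the paper gives no separate proof and presents the proposition as the combination of Lemma~\ref{lem:UBCGluing} with Theorem~\ref{prp:MatsumotoMorita}, i.e.\ Corollary~\ref{cor:AcyclicUBC} with $q=n-1$ applied to the closure of $\Manifold\setminus\Manifold'$, followed by the gluing lemma to force $\SimplicialVolume{\Manifold}=0$. Your extra bookkeeping (the collar homotopy equivalence, homotopy invariance of UBC via the comparison map, and the componentwise treatment of disconnected $\Manifold_1$ or $\Manifold$) is sound and makes explicit details the paper leaves implicit.
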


%% file: Section/Codim.tex
\label{scn:Codim}
The goal of the following section is to establish
vanishing results for regular neighborhoods of various codimension $2$
subspaces, since these represent the prime examples of the codimension $0$
manifolds in Proposition~\ref{prp:Splitting}. This will imply
Theorem~\ref{thm:ImmersedKnotsEtc}, the more refined version of
Theorem~\ref{thm:KnotGroup}.
Again all cohomology and homology groups will have real coefficients. Similarly
every fundamental cycle is a real fundamental cycle.

Since regular neighborhoods arise from normal bundles, we will construct
fundamental cycles by realizing the fiber transfer on the chain level and
then
gluing these together along the strata of a subspace.

The fiber transfer is sometimes called the Gysin map or the Umkehr map. There
are various definitions, some more geometric, some more algebraic in
flavor. We
will use the following:
Let $\FiberingProjektion{\ManifoldTotal}\colon\ManifoldTotal \to \ManifoldBase$
denote a fiber bundle of oriented
manifolds with fiber $\ManifoldFiber$ of dimension $\FiberDimension$. Then
$
  \FiberTransferHomology{\FiberingProjektion{\ManifoldTotal}}
  \colon
  \HomologyOfSpaceObject{*}{\ManifoldBase}{\Reals}
  \to
  \HomologyOfSpaceObject{*+\FiberDimension}{\ManifoldTotal}{\Reals}
$
is defined by
\[
  \apply
    {\FiberTransferHomology{\FiberingProjektion{\ManifoldTotal}}}
    {\HomologyClass}
  =
  \apply
    {
      PD_{\ManifoldTotal}
      \circ
      \FiberingProjektion{\ManifoldTotal}^*
      \circ
      PD_{\ManifoldBase}^{-1}
    }
    {\HomologyClass}
\]
where $PD_{\ManifoldTotal}$ and $PD_{\ManifoldBase}$ denote the respective
Poincare duality isomorphisms. This extends verbatim to the case, where
$\ManifoldBase$ or $\ManifoldTotal$ have boundary.
If a homology class $\HomologyClass$ is represented by some map
$\ContinuousMap \colon \Manifold \to \ManifoldBase$ i.e.
$
  \HomologyClass
  =
  \apply
    {\HomologyOfSpaceMorphism{\ContinuousMap}}
    {\FundamentalClass{\Manifold}}
$%
, then one easily checks that
$
  \apply
    {\FiberTransferHomology{\FiberingProjektion{\ManifoldTotal}}}
    {\HomologyClass}
  =
  \apply
    {\HomologyOfSpaceMorphism{\overline{\ContinuousMap}}}
    {\FundamentalClass{\ContinuousMap^* \ManifoldTotal}}
$
where $\overline{\ContinuousMap}$ denotes the induced map from the pullback
$\ContinuousMap^* \ManifoldTotal$ to $\ManifoldTotal$.

It is a classical consequence of Gromovs Mapping Theorem that the simplicial
volume vanishes for total spaces of fiberings with fibers of positive
dimension and with amenable fundamental group.
The following lemma can be understood as an extension of this.
\input{Lemma/FiberTransfer.tex}
With this at hand, let us focus on regular neighborhoods. The following
proposition describes their simplicial volume in arbitrary dimensions.
\input{Proposition/NeighborhoodImmersedCodim2.tex}
\begin{remark}
  In Section~11.2 in \cite{FrigerioMoraschiniMutliComplexes} the notion of
  locally coamenable subcomplexes was introduced (this notion was also
  introduced in \cite{GromovBoundedCohomology}). In particular they show that
  if around every point in a codimension two subcomplex $\Submanifold$ of a PL
  manifold $\Manifold$ there exists a small ball $B$ such that the local
  complement $B \setminus (\Submanifold \cap B)$ has amenable fundamental
  group, then the simplicial volume of a regular neighborhood of $\Submanifold$
  vanishes. Since the fundamental group of the Hopf Link is abelian, this also
  implies Proposition~\ref{prp:GeneralizedTubNeighborhood}, but it also
  includes many mores subcomplexes.
\end{remark}
Due to the nature of 1-dimensional and 0-dimensional manifolds, there are more
general subspaces with regular neighborhoods with vanishing simplicial volume.
In particular, their self-intersections do not have to be self-transverse
everywhere and even very specific self-intersections in codimension
$3$ are possible.
\input{Proposition/NeighborhoodImmersedSurface.tex}
As mentioned before, combining these two results with
Proposition~\ref{prp:Splitting}
yields the following theorem:
\input{Theorem/GeneralizedKnots.tex}

%% file: Lemma/FiberTransfer.tex
\begin{lemma}
\label{lem:FiberTransfer}
  Let $\pi\colon \ManifoldTotal \to \ManifoldBase$ denote a fiber bundle with
  fiber $\ManifoldFiber$ being a
  manifold with a potentially empty
  boundary such that $\ManifoldFiber$ and $\Boundary{\ManifoldFiber}$ are
  connected (or empty) and both have amenable fundamental group, then there
  exists chain maps
  \[
    \FiberTransferHomology{\pi}_{d}
    \colon
    C_d(\ManifoldBase)
    \to
    C_{d+f}(\ManifoldTotal, \Boundary{\ManifoldTotal})
  \]
  where $f$ denotes the dimension of the fiber and $\Boundary{\ManifoldTotal}$
  denotes the restriction of the bundle to the fiberwise boundary, of
  degreewise arbitrarily small norm realizing the fiber transfer
  \[
    \HomologyOfSpaceObject{d}{\ManifoldBase}{\Reals}
    \to
    \HomologyOfSpaceObject{d+f}{\ManifoldTotal,\Boundary{\ManifoldTotal}}{\Reals}
  \]
  on the chain level.
\end{lemma}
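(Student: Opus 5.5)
Since $\ManifoldFiber$ and $\Boundary{\ManifoldFiber}$ have amenable fundamental groups, the idea is to construct the transfer simplex by simplex. To each singular simplex $\sigma\colon\Delta^d\to\ManifoldBase$ we assign a relative $(d+f)$-chain on the pulled-back bundle $\sigma^*\ManifoldTotal$. This chain should represent the relative fundamental class of $(\sigma^*\ManifoldTotal,\sigma^*\Boundary{\ManifoldTotal})$ relative to the faces, and it should have arbitrarily small norm. We then push it forward to $\ManifoldTotal$.

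Because $\Delta^d$ is contractible, $\sigma^*\ManifoldTotal$ trivializes as $\Delta^d\times\ManifoldFiber$. We fix once and for all, for each simplex, such a trivialization. To keep the construction compatible with faces, we trivialize over a fixed point (say the first vertex) and use the canonical trivialization induced by a fibrewise connection or a choice of lift of the straight-line contraction.

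\textbf{Step 1 (fibre).} Since $\pi_1(\ManifoldFiber)$ and $\pi_1(\Boundary{\ManifoldFiber})$ are amenable, Gromov's Mapping Theorem gives vanishing bounded cohomology of $\ManifoldFiber$ and $\Boundary{\ManifoldFiber}$ in positive degrees. By \Cref{cor:AcyclicUBC} and the standard relative argument (as in Lemma~\ref{lem:UBCGluing}), we obtain relative fundamental cycles $z_\epsilon$ of $(\ManifoldFiber,\Boundary{\ManifoldFiber})$ with $\Norm{z_\epsilon}<\epsilon$. We also obtain the uniform boundary condition in all relevant degrees for $\ManifoldFiber$ and $\Boundary{\ManifoldFiber}$.

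\textbf{Step 2 (product chains).} Take the Eilenberg--Zilber cross product $\sigma\times z_\epsilon$. This defines $\FiberTransferHomology{\pi}_d(\sigma)$ as the image of the cross product of $\Identity_{\Delta^d}$ with $z_\epsilon$ under the trivialization. Its norm is bounded by a combinatorial constant $c_{d,f}$ times $\Norm{z_\epsilon}$.

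\textbf{Step 3 (chain map).} We need $\partial\FiberTransferHomology{\pi}(\sigma)=\FiberTransferHomology{\pi}(\partial\sigma)$ modulo $\Boundary{\ManifoldTotal}$. The trivializations restricted to faces differ from the face trivializations by a map $\partial_i\Delta^d\to\HomeoGroup{\ManifoldFiber}$. The resulting discrepancy is a relative boundary, and by UBC it can be corrected with a chain of norm at most $K\cdot c\cdot\epsilon$.

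Doing this coherently requires an induction on $d$, as in the construction of the chain-level transfer via acyclic models. In degree $d$ we define the transfer on each simplex as the cross-product term plus a correction $\rho_\sigma$. Here $\rho_\sigma$ fills the cycle $\FiberTransferHomology{\pi}(\partial\sigma)-\partial(\sigma\times z_\epsilon)$ inside $\sigma^*\ManifoldTotal$ relative to its boundary. This cycle is null-homologous because both terms represent the same relative class. Since $\Delta^d\times\ManifoldFiber\simeq\ManifoldFiber$ and the pair is $(\Delta^d\times\ManifoldFiber,\partial\Delta^d\times\ManifoldFiber\cup\Delta^d\times\Boundary{\ManifoldFiber})$, the UBC of the fibre and its boundary (amenable groups, so all degrees) gives $\Norm{\rho_\sigma}\le K_d\,\epsilon$. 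Choosing $\epsilon$ appropriately for each degree yields arbitrarily small norm.

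\textbf{Step 4 (realizes the transfer).} On a fundamental cycle of a manifold mapping to $\ManifoldBase$, the construction produces a chain that locally is a relative fundamental cycle of the pullback bundle. Hence, by the remark preceding the lemma, it represents the fundamental class of $\ContinuousMap^*\ManifoldTotal$. Naturality then shows that it realizes $PD\circ\pi^*\circ PD^{-1}$ on all classes, since real homology is spanned by such pushforwards up to multiples.

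The main obstacle is Step 3. Relative UBC for pairs needs care: we must combine UBC for $\ManifoldFiber$ and for $\Boundary{\ManifoldFiber}$ through the long exact sequence with norm control. The mapping theorem for pairs, with amenable $\pi_1(\Boundary{\ManifoldFiber})$, gives $H_b^*(\ManifoldFiber,\Boundary{\ManifoldFiber})=H_b^*$ vanishing, and then Matsumoto--Morita applies. This is also where connectedness of $\ManifoldFiber$ and $\Boundary{\ManifoldFiber}$ is used.
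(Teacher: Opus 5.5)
Your proposal follows essentially the paper's argument. As in the paper, $\pi^!$ is built by induction on the dimension of simplices, and arbitrary trivializations are used only in degree $0$, to place a small relative fundamental cycle of $(M,\partial M)$ over each point. In degree $d$, $\pi^!(\sigma)$ is then obtained inside $\sigma^*E\cong\Delta^d\times M$ by norm-controlled fillings, with constants given by the UBC constants of $\Delta^d\times\partial M$ and $\Delta^d\times M$. The cross product in Step~2 and the attempt at coherent trivializations are harmless but superfluous. With initial guess $0$, your Step~3 is exactly the paper's construction, in which $\pi^!(\sigma)$ is just the filling.

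Two points need fixing.

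\emph{The relative pair in Step~3.} The filling must be relative to $\sigma^*\partial E=\Delta^d\times\partial M$ only, not to the whole boundary $\partial\Delta^d\times M\cup\Delta^d\times\partial M$ that you name. Your cycle $c=\pi^!(\partial\sigma)-\partial(\sigma\times z_\epsilon)$ is supported entirely in that whole boundary, so it is already zero in that relative complex. Worse, any error allowed over $\partial\Delta^d\times M$ destroys $\partial\pi^!(\sigma)\equiv\pi^!(\partial\sigma)$ modulo $\partial E$.

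The paper does the correct relative filling by the two-step procedure you allude to in your last paragraph. This avoids any relative version of Theorem~\ref{prp:MatsumotoMorita}, which the paper does not provide. First, fill the $(d+f-2)$-cycle $\partial c$ by a chain $\eta$ in $\Delta^d\times\partial M$, using UBC of $\partial M$. Then fill the absolute cycle $c-\eta$ in $\Delta^d\times M$, using UBC of $M$. Both are boundaries for degree reasons when $d\ge 2$. For $d=1$ they are boundaries because the two ends are fundamental cycles of fibres; this is where connectedness of $\partial M$ enters.

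\emph{The source of $z_\epsilon$.} The small-norm cycles $z_\epsilon$ do not come from UBC or from Lemma~\ref{lem:UBCGluing}. They come from the vanishing of $\|M,\partial M\|$, which follows by duality from $H_b^f(M,\partial M)=0$ (Gromov). The paper simply takes this as given.
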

\begin{proof}
  Fix a fundamental cycle
  $
    \FundamentalCycle{(\ManifoldFiber, \Boundary{\ManifoldFiber})}
  $
  of arbitrarily small norm and for every point in $\ManifoldBase$ a
  trivialization of the fiber over that point (These do not have to satisfy any
  compatibility conditions, they are truly arbitrary).
  Now define
  $\FiberTransferHomology{\pi}_0$ to be the
  image of $\FundamentalCycle{(\ManifoldFiber,
  \Boundary{\ManifoldFiber})}$
  under the these arbitrary trivializations.

  We now proceed inductively. Suppose we are able to construct maps
  $\FiberTransferHomology{\pi}_{*}$ for $*\leq n-1$ of arbitrary small norm
  representing
  the
  fiber transfer and that
  $\apply{\pi_{*}}{\Simplex{*}}$ is supported
  on $\at{\ManifoldTotal}{\Simplex{*}}$ for any singular simplex $\Simplex{*}$.
  Let $\Simplex{n}$ denote a singular $n$-simplex in $\ManifoldBase$. Note that
  $\Boundary{\Simplex{n}}$ is an $n-1$-sphere. Then
  $
        \apply
          {\FiberTransferHomology{\pi}_{n-1}}
          {\Boundary{\Simplex{n}}}
  $
  represents the fundamental cycle of
  $\at{\ManifoldTotal}{\Boundary{\Simplex{n}}}$, therefore
  $
    \Boundary{
        \apply
          {\FiberTransferHomology{\pi}_{n-1}}
          {\Boundary{\Simplex{n}}}
    }
  $
  is a fundamental cycle of
  $\Boundary{\at{\ManifoldTotal}{\Boundary{\Simplex{n}}}}$.
  Let $K_1$ denote the UBC constant in degree $n+f-2$ of
  $
    \StandardSimplex{n} \times \Boundary{\ManifoldFiber}
  $%
  , which exists because the fundamental group of $\Boundary{\ManifoldFiber}$
  is amenable. Let $\rho_{\Simplex{n}}$ denote a filling of
  $
    \Boundary
    {
      \apply
      {\FiberTransferHomology{\pi}_{n-1}}
      {\Boundary{\Simplex{n}}}
    }
  $
  in $\at{\ManifoldTotal}{\Simplex{n}}\times \Boundary{\ManifoldFiber}$ such
  that
  \[
    \Norm{\rho_{\Simplex{n}}}
    \leq
    K_1
    \Norm{
      \Boundary
        {
          \apply
            {\FiberTransferHomology{\pi}_{n-1}}
            {\Boundary{\Simplex{n}}}
        }
    }
  \]
  Now
  $
    \apply
      {\FiberTransferHomology{\pi}_{n-1}}
      {\Boundary{\Simplex{n}}}
    +
    \rho_{\Simplex{n}}
  $
  represents a fundamental cycle of $\Boundary{(\Simplex{n}\times
  \ManifoldFiber)}$ by Lemma~\ref{lem:BoundingCycle}.
  Let $K_2$ denote the UBC constant of
    $
      \Simplex{n}
      \times
      \ManifoldFiber
    $
    in degree $n+f-1$, then there exists a filling $P_{\Simplex{n}}$ of
    $
      \apply
        {\FiberTransferHomology{\pi}_{n-1}}
        {\Boundary{\Simplex{n}}}
      +
      \rho_{\Simplex{n}}
    $
    such that
    \[
      \Norm{P_{\Simplex{n}}}
      \leq
      K_2
      \Norm
        {
          \apply
            {\FiberTransferHomology{\pi}_{n-1}}
            {\Boundary{\Simplex{n}}}
          +
          \rho_{\Simplex{n}}
        }
      \leq
      K_2
      \left(
        (n+1)\Norm{\FiberTransferHomology{\pi}_{n-1}}
        +
        K_1
        (n+1)n\Norm{\FiberTransferHomology{\pi}_{n-1}}
      \right)
    \]
    Since $\Norm{\FiberTransferHomology{\pi}_{n-1}}$ can be chosen to be
    arbitrarily small, we conclude by Lemma~\ref{lem:BoundingCycle} that this
    yields the desired extensions of arbitrarily small norm.
\end{proof}

%% file: Proposition/NeighborhoodImmersedCodim2.tex
\begin{proposition}
\label{prp:GeneralizedTubNeighborhood}
  Let $\Manifold$ denote a closed $n$-manifold and
  $
    \Submanifold
  $
  a compact subset of $\Manifold$ such that:
  \begin{enumerate}[(i)]
  \item
    $\Submanifold$ is the union of two subsets $\Singularity_{n-2}$ and
    $\Singularity_{n-4}$ such that $\Singularity_{n-2}$ is open in
    $\Submanifold$ and $\Singularity_{n-4}$ is the topological boundary of
    $\Singularity_{n-2}$
  \item
    around every point in $\Singularity_{n-2}$, there exists a
    neighborhood and
    a chart in $\Manifold$ such that $\Submanifold$ is mapped to
    $\Reals^{n-2}
    \subset \Reals^n$.
  \item
    around every point in $\Singularity_{n-4}$, there exists a
    neighborhood and
    a chart in $\Manifold$ such that $\Singularity_{n-4}$ is mapped to
    $\Reals^{n-4}\subset \Reals^{n-2}$ and $\Singularity_{n-2}$ is mapped
    to
    two $n-2$-planes in $\Reals^{n}$ that intersect transversly in
    $\Reals^{n-4}$.
  \end{enumerate}
  then there exists a closed neighborhood $\TubularNeighborhood{\Submanifold}$
  of
  $\Submanifold$ such that
  $\SimplicialVolume{\TubularNeighborhood{\Submanifold}}$ vanishes and
  $
  \Manifold\setminus\TubularNeighborhood{\Submanifold}
  \cong
  \Manifold\setminus \Submanifold
  $%
\end{proposition}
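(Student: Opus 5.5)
We would build $\TubularNeighborhood{\Submanifold}$ as a union of two pieces: a regular neighborhood $U_4$ of the singular stratum $\Singularity_{n-4}$, and a disk bundle $U_2$ over the compact manifold with boundary $\Singularity_{n-2}' = \Singularity_{n-2}\setminus \Inner{U_4}$. Then we construct fundamental cycles of $(\TubularNeighborhood{\Submanifold},\Boundary{\TubularNeighborhood{\Submanifold}})$ of arbitrarily small norm by gluing chain-level fiber transfers on the two pieces, in the spirit of Lemma~\ref{lem:UBCGluing}.

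First we construct the neighborhood. By (iii), $\Singularity_{n-4}$ is a closed $(n-4)$-manifold. The local model $\Reals^{n-4}\times(\Reals^2\times 0\cup 0\times\Reals^2)\subset\Reals^{n-4}\times\Reals^4$ globalises, via standard normal bundle arguments, to a $D^4$-bundle $\pi_4\colon U_4\to\Singularity_{n-4}$. In each fiber, $\Submanifold$ meets $D^4$ in two transverse $2$-disks, so $\Submanifold\cap\Boundary{D^4}$ is a Hopf link. Let $F_4$ be $D^4$ minus an open tubular neighborhood of this Hopf link. Then $F_4$ is $T^2\times I$, with abelian fundamental group, and its boundary pieces are tori. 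Similarly, by (ii), $\Singularity_{n-2}'$ is a compact $(n-2)$-manifold whose boundary lies in $\Boundary{U_4}$, and it has a normal $D^2$-bundle $U_2$. We set $\TubularNeighborhood{\Submanifold}=U_2\cup U_4$. The complement $\Manifold\setminus \TubularNeighborhood{\Submanifold}$ deformation retracts onto, and is homeomorphic to, $\Manifold\setminus\Submanifold$, by pushing radially along the fibers in both pieces; the collars match in the local models.

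Next we produce small cycles. We take a fundamental cycle $c$ of $(\Singularity_{n-2}',\Boundary)$. Applying Lemma~\ref{lem:FiberTransfer} to $U_2\to\Singularity_{n-2}'$, whose fiber $D^2$ has boundary $S^1$ and both are amenable, gives a chain $z_2=\FiberTransferHomology{\pi}(c)$ of arbitrarily small norm. It represents the relative fundamental class, and its boundary consists of a small chain on the sphere bundle part together with the piece over $\Boundary\Singularity_{n-2}'$. That latter piece lies in $\Boundary U_4$, over a $T^2$-bundle (the solid-torus neighborhoods of the Hopf link components, restricted to the boundary) over $\Singularity_{n-4}$. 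We then need a chain $z_4$ on $U_4$ filling this piece modulo $\Boundary\TubularNeighborhood{\Submanifold}$, that is, relative to the part of $\Boundary U_4$ not glued to $U_2$.

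To get $z_4$, we view $U_4$ relative to the outer part of its boundary as a bundle over $\Singularity_{n-4}$ whose fiber is $D^4$ with prescribed boundary pieces, all of whose relevant strata have abelian fundamental group. We run the inductive cell-by-cell argument of Lemma~\ref{lem:FiberTransfer} once more. Over each simplex, the boundary datum coming from $z_2$ is small, and the UBC constants of $\Delta_k\times T^2$ and $\Delta_k\times T^2\times I$ (Corollary~\ref{cor:AcyclicUBC}, since amenable groups have vanishing bounded cohomology) bound the fillings. This gives $\Norm{z_4}\le C\Norm{z_2}$, and by Lemma~\ref{lem:BoundingCycle} the chain $z_2+z_4$ represents the relative fundamental class. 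Letting the norm of $c$ tend to zero then shows that $\SimplicialVolume{\TubularNeighborhood{\Submanifold}}$ vanishes.

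The main obstacle is this last step, which is a relative version of Lemma~\ref{lem:FiberTransfer}. The boundary values over $\Singularity_{n-4}$ are prescribed by $z_2$ rather than chosen freely, so we must check that the induction still closes: at each stage the chain to be filled must be a cycle that is a boundary in the appropriate space, and the norms must stay controlled uniformly in the simplex. Our first attempt would be to do the induction over simplices of $\Singularity_{n-4}$ with the Hopf-link complement $T^2\times I$ as fiber, treating the boundary contributions of $z_2$ as additional small input at each filling step.
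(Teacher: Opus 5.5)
Your decomposition $\TubularNeighborhood{\Submanifold}=U_2\cup U_4$, the use of Lemma~\ref{lem:FiberTransfer} on $U_2$, and the two-stage extension over $U_4$ are exactly the paper's proof. The extension first crosses the Hopf-link complement $\Torus{2}\times I$ in the $\Sphere{3}$-bundle, using that its fundamental group is abelian, and then crosses the $\Ball{4}$-bundle. The paper dismisses this last step as ``completely analogous'' to Lemma~\ref{lem:FiberTransfer}.

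However, the step you flag is where your argument is incomplete, and the resolution you sketch does not work as stated. You take $c$ to be an arbitrary fundamental cycle of $(\Singularity_{n-2}',\Boundary{\Singularity_{n-2}'})$. Then the boundary datum $\pi^{!}(\Boundary{c})$ is a sum over singular simplices of $\Boundary{\Singularity_{n-2}'}$ that bear no relation to simplices of $\Singularity_{n-4}$, so there is nothing to feed into an induction over simplices of $\Singularity_{n-4}$. A global small filling in $U_4$ is not available either. Since $U_4\simeq\Singularity_{n-4}$ has dimension $n-4$, $(n-1)$-UBC of $U_4$ amounts to the vanishing of $\BoundedCohomologyOfSpaceObject{n}{\HomotopyGroupOfObject{1}{\Singularity_{n-4}}{\ast}}{\Reals}$, which you cannot assume. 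For the same reason your estimate $\Norm{z_4}\le C\Norm{z_2}$ has no basis. Also, it is not $\Norm{c}$ that tends to zero: $c$ is fixed, and its norm is bounded below by the relative simplicial volume of $\Singularity_{n-2}'$. What tends to zero is the degreewise norm $\epsilon$ of the transfer $\pi^{!}$.

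The missing idea is to choose $c$ adapted to $\Singularity_{n-4}$.
\begin{enumerate}
\item Let $\psi\colon\Boundary{\Singularity_{n-2}'}\to\Singularity_{n-4}$ be the bundle whose fibre is the Hopf link. Its two circles may be interchanged by monodromy, so factor $\psi$ through the corresponding double cover and apply Lemma~\ref{lem:FiberTransfer} with fibre $\Sphere{1}$. This gives a chain-level transfer $\psi^{!}$.
\item Fix a fundamental cycle $c_4$ of $\Singularity_{n-4}$. Then $\psi^{!}(c_4)$ is a fundamental cycle of $\Boundary{\Singularity_{n-2}'}$. Adding a chain supported in the boundary to any relative fundamental cycle, you can arrange $\Boundary{c}=\psi^{!}(c_4)$.
\item Now $\pi^{!}(\Boundary{c})=\pi^{!}\psi^{!}(c_4)$. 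The assignment $\sigma\mapsto\pi^{!}\psi^{!}(\sigma)$ has norm $O(\epsilon)$ and is supported over $\sigma$. Its boundary differs from $\pi^{!}\psi^{!}(\Boundary{\sigma})$ by a chain in the torus bundle over $\sigma$.
\item The induction over $k$-simplices $\sigma$ of $\Singularity_{n-4}$ now closes. First fill that torus term, together with the chains already built over $\Boundary{\sigma}$, in $\StandardSimplex{k}\times\Torus{2}\times I$. For $k=0$ the two boundary tori bound $\Torus{2}\times I$, because the solid tori and $\Torus{2}\times I$ together make up $\Sphere{3}$. For $k\ge 1$ the relevant homology vanishes.
\item Then fill the resulting cycle of the $\Sphere{3}$-bundle in $\StandardSimplex{k}\times\Ball{4}$. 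This space is contractible, and you need its UBC constant as well, not only those of the torus pieces.
\end{enumerate}
This yields $\Norm{z_4}\le C\epsilon\Norm{c_4}$. The chain $z_2\pm z_4$ is then a relative cycle representing the fundamental class; check this locally at a point of $U_2$.

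A minor slip: $\Ball{4}$ minus a neighbourhood of the link in its boundary is not $\Torus{2}\times I$. The Hopf-link complement $\Torus{2}\times I$ lives in $\Sphere{3}=\Boundary{\Ball{4}}$.
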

The reader should think of a codimension two immersion with self-transverse
double-points. In this case $\Singularity_{n-2}$ represents the points, where
the immersion is injective and $\Singularity_{n-4}$, represents the double
points, which form a codimension four submanifold.
\begin{proof}
  By definition $\Singularity_{n-4}$ is a codimension $4$ submanifold of
  $\Manifold$.
  Let us denote a complement of a small open tubular neighborhood of
  $\Singularity_{n-4}$ in $\Submanifold$ by $\hat{\Submanifold}_{n-2}$.
  Then
  $\hat{\Submanifold}_{n-2}$ is a codimension $2$-submanifold
  with boundary of $\Manifold$. The disk bundle of its normal bundle is a
  $\Ball{2}$-bundle over $\hat{\Submanifold}_{n-2}$. By
  Lemma~\ref{lem:FiberTransfer} it has vanishing simplicial volume.

  By the tranversality condition, the tangent spaces of the
  two $n-2$-planes of $\Singularity_{n-2}$ of a point
  in $\Singularity_{n-4}$ intersect a sphere of the fiber of the normal
  bundle of $\Singularity_{n-4}$ in a Hopf link.

  At the sphere of this normal bundle of a point of $\Singularity_{n-4}$ we
  are now given fundamental cycles of a tubular neighborhood of the two circles
  constituting the Hopf link and in order to obtain a fundamental cycle of
  a whole regular neighborhood of $\Submanifold$, we have
  to extend this to a fundamental cycle of a tubular neighborhood of
  $\Singularity_{n-4}$.
  The construction of such an extension of arbitrarily small norm is
  completely analogous to the proof of
  Lemma~\ref{lem:FiberTransfer} using that the fundamental group of the
  complement of the Hopf link is abelian i.e. one first extends the given
  fundamental cycles to fundamental cycles of the sphere bundle of the
  normal bundle of $\Singularity_{n-4}$ and then extends these to the
  whole disk
  bundle of the normal bundle of $\Singularity_{n-4}$.
\end{proof}

%% file: Proposition/NeighborhoodImmersedSurface.tex
\begin{proposition}
\label{prp:TubNeighborhood}
  Let $\Manifold$ denote a closed $4$-manifold and $\Submanifold$ a
  compact subset of $\Manifold$ such that:
  \begin{enumerate}[(i)]
  \item
    $\Submanifold$ is the union of three sets: $\Singularity_2$,
    $\Singularity_1$ and $\Singularity_0$, such that $\Singularity_2$ is
    open
    in $\Submanifold$,
    and the topological boundary of $\Singularity_2$ is the disjoint union of
    $\Singularity_0$
    and
    $\Singularity_1$.
  \item
    Around every point in $\Singularity_2$, there exists a neighborhood in
    $\Manifold$ and a chart for this neighborhood such that
    $\Singularity_2$ is
    mapped to $\Reals^2 \subset \Reals^4$.
  \item
  \label{itm:Complex}
    $\Singularity_0$ is a discrete set of points such that around every
    such
    point $x$, there exists a diffeomorphism from a small ball around $x$
    to
    $\ComplexNumbers^2$ such that $x$ is mapped to the origin and
    $\Submanifold$ is mapped to a union of complex subspaces of
    $\ComplexNumbers^2$.
  \item
    $\Singularity_1$ is a disjoint union of circles and around every point
    in
    such a circle, there exists a neighborhood in $\Manifold$ and a chart
    such
    that $\Singularity_1$ agrees with $\Reals \subset \Reals^4$ and
    $\Submanifold$ gets mapped to a union of half-planes in $\Reals^4$ that
    intersect in $\Singularity_1$.
  \end{enumerate}
  Then there exists a closed neighborhood
  $\TubularNeighborhood{\Submanifold}$
  of
  $\Submanifold$ such that
  $\SimplicialVolume{\TubularNeighborhood{\Submanifold}}$ vanishes and
  $
    \Manifold\setminus\TubularNeighborhood{\Submanifold}
    \cong
    \Manifold\setminus \Submanifold
  $%
  .
\end{proposition}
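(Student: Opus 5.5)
We follow the same strategy as in the proof of Proposition~\ref{prp:GeneralizedTubNeighborhood}: build a fundamental cycle of a regular neighborhood of $\Submanifold$ of arbitrarily small norm stratum by stratum, starting with the top stratum and extending over neighborhoods of the lower strata. Whenever we extend, we use a uniform boundary condition coming from amenability of the relevant local complement, via Corollary~\ref{cor:AcyclicUBC}, together with Lemma~\ref{lem:BoundingCycle}.

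\emph{Step 1: the top stratum.} Remove small open regular neighborhoods of $\Singularity_0$ (small $4$-balls $B_x$) and of $\Singularity_1$ (tubular neighborhoods $\Sphere{1}\times\Ball{3}$) from $\Submanifold$. What remains of $\Singularity_2$ is a compact surface $\hat{\Submanifold}_2$ with boundary, embedded with trivializable local structure. Its normal disk bundle is a $\Ball{2}$-bundle, and $\Ball{2}$ and $\Sphere{1}$ are connected with amenable fundamental groups. Lemma~\ref{lem:FiberTransfer}, applied to a fundamental cycle of $(\hat{\Submanifold}_2,\Boundary{\hat{\Submanifold}_2})$, gives relative fundamental cycles of this disk bundle of arbitrarily small norm. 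Their boundaries lie on the boundaries of the removed neighborhoods, namely on solid tori around the link components in $\Boundary B_x\cong\Sphere{3}$ and on $\Ball{2}$-bundles over the arcs of the link in $\Sphere{1}\times \Sphere{2}$.

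\emph{Step 2: the circles $\Singularity_1$.} Near a point of $\Singularity_1$ the set $\Submanifold$ is $\Reals\times(\text{cone on } k \text{ points in } \Sphere{2})$. The normal sphere bundle of $\Singularity_1$ therefore meets $\Submanifold$ in $k$ arcs over the circle. The neighborhood of these arcs in the $\Sphere{2}$-fiber is a disjoint union of $k$ disks $\Ball{2}$, and its complement in $\Sphere{2}$ is a $k$-punctured sphere, whose fundamental group is free and non-amenable for $k\geq 3$. So we cannot fill fiberwise in $\Sphere{2}$; we must also use the $\Sphere{1}$ direction.

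For this, regard the neighborhood of $\Singularity_1$ as a $\Ball{3}$-bundle over the circle, and the region to be filled as a bundle over $\Singularity_1$ whose fiber is $\Ball{3}$ minus $k$ radial half-disks. That fiber is a ball, and its boundary piece is a sphere minus disks. The cleanest route is the one already used in Lemma~\ref{lem:FiberTransfer}: triangulate $\Singularity_1$ and work simplex by simplex. Over vertices, the given cycle on $k$ disks in $\Sphere{2}$ must be completed to a small fundamental cycle of $\Ball{3}$. Its boundary is a sum of circle fundamental cycles, and it can be capped with small norm inside the planar region, using that the complement piece has the homotopy type of a graph. Here we want the relevant fillings in degree $1$ (for $\Sphere{1}$ cycles) to have a uniform constant. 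Every space satisfies $0$-UBC, and $1$-UBC holds because $H^2_b$ of the relevant groups is not a problem when the cycles are multiples of a single loop. We expect this to be the main obstacle: the $H^2_b$ of a free group is huge, so the naive fiberwise argument fails. The fix we would try first is to note that the boundary pieces are multiples of boundary circles, each of which has amenable (cyclic) stabilizer. Fillings of multiples of a fixed loop $\gamma$ by $m\cdot$(small chain) can be done with norm bounded by the norm of the given cycle, using that $\gamma$ bounds a disk in $\Ball{3}$. Then we proceed over the $1$-simplices exactly as in Lemma~\ref{lem:FiberTransfer}, using UBC of $\StandardSimplex{1}\times\Ball{3}$, whose fundamental group is trivial.

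\emph{Step 3: the points $\Singularity_0$.} At $x\in\Singularity_0$, condition~(\ref{itm:Complex}) gives that $\Submanifold\cap\Boundary B_x$ is a union of complex lines meeting $\Sphere{3}$, i.e. a link of Hopf fibers. Its complement is Seifert fibered with fundamental group a central extension of a free group. That group is not amenable for at least three components, so again we must exploit the structure. Via the Hopf fibration, the regular neighborhood of the link in $\Sphere{3}$ together with its complement is a circle bundle over $\Sphere{2}$, respectively over $\Sphere{2}$ minus disks. Since $\Sphere{1}$ is amenable, Lemma~\ref{lem:FiberTransfer} applied to this circle bundle produces small fundamental cycles of the whole of $\Sphere{3}$ compatible with the given ones on the solid tori: we take the transfer of a fundamental cycle of the base which restricts correctly on the disks. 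Finally we cone into $B_x$ using $3$-UBC of $B_x$, which is contractible, together with Lemma~\ref{lem:BoundingCycle}. Assembling the three steps gives a relative fundamental cycle of $\TubularNeighborhood{\Submanifold}$ of arbitrarily small norm, and the complement statement holds because $\TubularNeighborhood{\Submanifold}$ is a collar-type regular neighborhood.
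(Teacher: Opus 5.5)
Step~1 and the overall strategy of Step~3 match the paper. Step~2 has a genuine gap, and it sits exactly at the point you flag as the main obstacle.

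\textbf{Why Step~2 fails.} You work simplex by simplex over a triangulation of $\Singularity_1$, with fibre $D^3$. Over each vertex this forces you to complete the $k$ given small relative cycles of the disks $D_i\subset S^2$ by a $2$-chain $e$. That chain must lie in the part of $\partial\TubularNeighborhood{\Submanifold}$ in that fibre, namely the $k$-holed sphere $P=S^2\setminus\bigsqcup_i\mathring{D}_i$. Its boundary $\partial e$ must be minus the sum of the boundaries of the disk cycles, which is a fundamental cycle of $\partial P$.

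By Lemma~\ref{lem:BoundingCycle}, any such $e$ represents the fundamental class of $(P,\partial P)$. Hence $\Norm{e}\geq\SimplicialVolume{P,\partial P}=2(k-2)$, which is positive for $k\geq 3$; already $k=4$ for two tori meeting along a circle. So no small completion exists over a vertex, whatever $3$-chain in $D^3$ you choose. In particular $P$ does not satisfy $1$-UBC, consistent with Theorem~\ref{prp:MatsumotoMorita}, since $H^2_b$ of a non-abelian free group is non-zero while $H^2(P)=0$. Your claim that $1$-UBC holds ``for multiples of a single loop'' does not apply, because $\partial P$ consists of $k$ distinct loops.

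The proposed fixes do not work either:
\begin{itemize}
\item Capping the loops by disks in the interior of $D^3$ puts part of the boundary of your chain outside $\partial\TubularNeighborhood{\Submanifold}$. You would then not obtain a relative fundamental cycle of the neighbourhood.
\item The step over $1$-simplices ``as in Lemma~\ref{lem:FiberTransfer}'' needs UBC for $\Delta^1\times P$, the analogue of $\Delta^n\times\partial F$ there. That space again has free fundamental group.
\end{itemize}
Treating $\Singularity_1$ as base and $D^3$ as fibre is the wrong fibration, because the relevant part of the fibre boundary is not amenable.

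\textbf{The missing idea.} The paper uses the circle direction as the amenable \emph{fibre} of the complementary boundary piece, not as the base. The part of $\partial\TubularNeighborhood{\Submanifold}$ near a component of $\Singularity_1$ is $S^1\times S^2$ minus solid tori around $\Submanifold\cap(S^1\times S^2)$, which is a finite covering of $S^1$. The paper then proceeds as follows:
\begin{enumerate}
\item Pass to a finite cover of $S^1$ over which this covering becomes trivial. The piece becomes $S^1\times P$, whose relative simplicial volume vanishes because of the $S^1$-factor.
\item Glue a small relative fundamental cycle of $S^1\times P$ to the lift of the given chain on the solid tori, along the boundary tori, using UBC for $T^2$ (Corollary~\ref{cor:AcyclicUBC}). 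This gives a small fundamental cycle of $S^1\times S^2$.
\item Fill it in $S^1\times D^3$ using $3$-UBC, which holds since the fundamental group is $\Integers$.
\item Push down along the cover and divide by the degree.
\end{enumerate}

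\textbf{A smaller gap in Step~3.} The same gluing along tori is missing there. The Hopf transfer over the disks in $S^2$ and the $D^2$-bundle transfer from Step~1 come from different fibrations of the solid tori. So you cannot choose the base cycle to make them agree on the chain level. Instead, transfer a relative fundamental cycle of $S^2$ minus the disks to the link complement. Then glue it to the given chain along the boundary tori using UBC for $T^2$, before coning into the ball as you do.
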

The picture the reader should have in mind is that $\Submanifold$ represents an
immersed two-dimensional submanifold that has self-transverse multiple points
that correspond to the points in $\Singularity_0$ as well as self-intersections
that form circles that correspond to the points in $\Singularity_1$. In
particular, it is important that
$\Singularity_0$ and $\Singularity_1$ are disjoint.

For the $\Singularity_1$-case, an example is given by the product of a circle
and a figure eight in a product of surfaces. This represents two tori that
intersect in a circle.
An example for the $\Singularity_0$-case is given by the
union of the two fibers and the diagonal in the product of a surface with
itself.

Since the presence of complex numbers in (\ref{itm:Complex}) might seem
arbitrary, let us first expand on this:
The intersection of the $2$-planes corresponding to $\Singularity_2$ in a small
sphere centered at a point in $\Singularity_0$ yields a, so called, great circle
link in $\Sphere{3}$. The reason that we require these to come from a complex
chart (although no overall complex structure is required!) is due to the fact
that during the proof we need the link complement to have vanishing simplicial
volume.
For such a complex great circle link this is true, while there are even some
general great circle links with hyperbolic complement. We refer the reader to
\cite{WalshGreatCircleLinks} for an introduction to great circle links.
\begin{remark}
  While Proposition~\ref{prp:GeneralizedTubNeighborhood} was implied by
  Theorem~11.2.3 in
  \cite{FrigerioMoraschiniMutliComplexes}, we want to stress that
  Proposition~\ref{prp:TubNeighborhood} does not follow from their theorem. The
  local complements around points in $\Singularity_1$ and $\Singularity_0$
  do not have amenable fundamental groups in general.
\end{remark}
\begin{proof}
  The proof will be analogous to the previous proof. Let us denote the
  complement of a small neighborhood of $\Singularity_0 \cup
  \Singularity_1$ by
  $\hat{\Submanifold}_2$, this is a submanifold (with boundary) of
  $\Manifold$
  and it has a two-dimensional normal bundle.
  Using Lemma~\ref{lem:FiberTransfer}, we can construct a
  representative of the fundamental class of a small tubular neighborhood
  of $\hat{\Submanifold}_2$.

  Let us first consider the points in $\Singularity_0$. Note that the
  intersection of $\Singularity_2$ with a sphere centered around a point
  in
  $\Singularity_0$ looks like a collection of fibers of the Hopf
  fibration. In
  particular, the complement of the tubular neighborhood of
  $\Singularity_2$ in
  this sphere has vanishing simplicial volume as an $\Sphere{1}$-bundle.
  Since
  the intersection of this complement and the tubular neighborhood
  consists of
  tori, which have amenable fundamental group, we can extend the boundary
  of
  the fundamental cycle for the tubular neighborhood of $\hat{\Submanifold}_2$
  to the
  3-sphere centered at the point in $\Singularity_0$. Since $\Ball{4}$ is
  simply-connected, we can extend this cycle to a representative of the
  fundamental class of a regular neighborhood of $\Singularity_2 \cup
  \Singularity_0$.

  We will proceed similarly for the circles in $\Singularity_1$. The
  neighborhood of one of these circles is diffeomorphic to $\Sphere{1}
  \times
  \Ball{3}$ and the intersection of $\Singularity_2$ with
  $\Sphere{1} \times \Sphere{2}$ is a covering of $\Sphere{1}$. We are given a
  fundamental
  cycle
  for a neighborhood of said intersection, which we have to extend to a
  fundamental cycle of $\Sphere{1}\times \Ball{3}$. Since this
  intersection is
  a finite covering, there exists a finite covering of $\Sphere{1}$ such
  that
  the pullback is diffeomorphic to a trivial covering. The transfer of the
  fundamental cycle for the tubular neighborhood of $\hat{\Submanifold}_2$
  yields a fundamental cycle of $\bigsqcup \Sphere{1} \times \Ball{2}
  \subset
  \Sphere{1}\times \Sphere{2}$. The complement is given by the product of
  $\Sphere{1}$ and the complement of finitely many points in $\Sphere{2}$.
  In
  particular this has vanishing simplicial volume and the intersection is
  again
  given by tori. Hence we can extend this to a fundamental cycle of
  $\Sphere{1}\times \Sphere{2}$. Since $\Sphere{1} \times \Ball{3}$ has
  amenable fundamental group, we can extend this to a fundamental cycle of
  $\Sphere{1}\times \Ball{3}$. Pushing this down along the aforementioned
  covering yields a fundamental cycle of $(\Singularity_2 \cup
  \Singularity_0) \cup \Singularity_1$ i.e. all of $\Submanifold$.
\end{proof}

%% file: Theorem/GeneralizedKnots.tex
\begin{theorem}
\label{thm:ImmersedKnotsEtc}
  Let $\Manifold$ denote an $n$-dimensional manifold with non-zero simplicial
  volume and let
  $
    \Submanifold
  $
  denote a codimension two subspace satisfying the hypothesis of
  Proposition~\ref{prp:GeneralizedTubNeighborhood} or if $n=4$ the
  hypothesis of Proposition~\ref{prp:TubNeighborhood}, then
  the $n$-th bounded cohomology of $\Manifold \setminus
  \apply{\ContinuousMap}{\Submanifold}$ is non-zero.
\end{theorem}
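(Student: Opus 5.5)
The plan is to combine the regular-neighborhood results of this section with Proposition~\ref{prp:Splitting}. Take $\Submanifold$ as in the statement. It satisfies the hypotheses of Proposition~\ref{prp:GeneralizedTubNeighborhood}, or, when $n=4$, those of Proposition~\ref{prp:TubNeighborhood}. In either case we obtain a closed neighborhood $\TubularNeighborhood{\Submanifold}$ of $\Submanifold$ with two properties:
\[
\SimplicialVolume{\TubularNeighborhood{\Submanifold}}=0
\qquad\text{and}\qquad
\Manifold\setminus\TubularNeighborhood{\Submanifold}\cong\Manifold\setminus\Submanifold .
\]
Here the simplicial volume of the neighborhood is understood relative to its boundary, as produced by the fundamental cycles constructed in those proofs. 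We then set $\Manifold'=\TubularNeighborhood{\Submanifold}$.

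First we check that $\Manifold'$ is a codimension $0$ submanifold with boundary of $\Manifold$, as Proposition~\ref{prp:Splitting} requires. Since $\TubularNeighborhood{\Submanifold}$ is built as a regular neighborhood, its frontier is the union of two pieces: the sphere bundles of the normal bundles over the strata, and the boundaries of the small balls or solid tori around the singular strata. After smoothing corners this is a closed hypersurface. For this we may need to replace $\TubularNeighborhood{\Submanifold}$ by a slightly smaller regular neighborhood, which leaves its complement unchanged up to diffeomorphism. The constructions in the proofs of the two propositions give precisely a relative fundamental cycle of $(\Manifold',\Boundary{\Manifold'})$ of arbitrarily small norm, so $\SimplicialVolume{\Manifold',\Boundary{\Manifold'}}=0$.

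Next we apply Proposition~\ref{prp:Splitting} with $\Manifold$ of non-zero simplicial volume and with this $\Manifold'$. It gives that $\BoundedCohomologyOfSpaceObject{n}{\HomotopyGroupOfObject{1}{\Manifold\setminus\Manifold'}{\ast}}{\Reals}$ is non-zero. Because $\Manifold\setminus\Manifold'\cong\Manifold\setminus\Submanifold$, the two fundamental groups agree. Gromov's Mapping Theorem then identifies the bounded cohomology of the group with that of the space $\Manifold\setminus\Submanifold$, which is the claim; the $\apply{\ContinuousMap}{\Submanifold}$ in the statement is just $\Submanifold$ viewed as a subspace.

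The proof of Proposition~\ref{prp:Splitting} itself runs through Lemma~\ref{lem:UBCGluing} by contraposition. If the $n$-th bounded cohomology of the complement vanished, then Corollary~\ref{cor:AcyclicUBC} would give $(n-1)$-UBC for the closure of the complement. Gluing it to $\Manifold'$ would then force $\SimplicialVolume{\Manifold}=0$, a contradiction.

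The step I expect to be the main obstacle is the identification of the two complements. Lemma~\ref{lem:UBCGluing} needs UBC for the compact piece $\Manifold\setminus\Inner{\Manifold'}$, while the theorem is about the open set $\Manifold\setminus\Submanifold$. I would handle this by noting that $\Manifold\setminus\Submanifold$ deformation retracts onto $\Manifold\setminus\Inner{\TubularNeighborhood{\Submanifold}}$ by collapsing the collar $\TubularNeighborhood{\Submanifold}\setminus\Submanifold$ radially, so the two have the same fundamental group. I would also check that connectedness is preserved, since codimension two does not disconnect $\Manifold$.
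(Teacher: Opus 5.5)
Your proposal is correct and follows the same route as the paper: take the regular neighborhood supplied by Proposition~\ref{prp:GeneralizedTubNeighborhood} (or Proposition~\ref{prp:TubNeighborhood} when $n=4$), use that its relative simplicial volume vanishes and that its complement has the homotopy type of $\Manifold\setminus\Submanifold$, and then apply Proposition~\ref{prp:Splitting}. Your additional remarks spell out steps the paper leaves implicit: working with the relative simplicial volume, the collar retraction onto the closed complement, connectedness, and the passage from the group to the space via Gromov's Mapping Theorem.
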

\begin{proof}
  We can write $\Manifold$ as the union of a regular neighborhood of
  $\Submanifold$ and the complement of this regular neighborhood. By assumption
  this complement is homotopy equivalent to the complement of $\Submanifold$ in
  $\Manifold$. By
  Proposition~\ref{prp:GeneralizedTubNeighborhood} and
  Proposition~\ref{prp:TubNeighborhood}
  respectively, we obtain that these
  regular neighborhoods are codimension-$0$ submanifolds with vanishing
  simplicial volume. Because $\Manifold$ has non-zero simplicial volume,
  Proposition~\ref{prp:Splitting} implies that the complement of these regular
  neighborhoods has non-zero $n$-th bounded cohomology.
\end{proof}

%% file: Section/Construction.tex
\label{scn:Construction}
The goal of this sections is to translate the conditions of
Theorem~\ref{thm:ImmersedKnotsEtc} into an easily verifiable topological
condition.
This will be accomplished by using a modified Pontryagin-Thom construction.
For this purpose, let us first briefly recall the classical Pontryagin-Thom
constructions (See Chapter~18 in \cite{MilnorStasheff}):

Let $\Manifold$ denote an $n$-dimensional manifold and $\Submanifold\subset
\Manifold$ a submanifold of dimension $n-k$. Then $\Submanifold$ admits a
tubular neighborhood $\TubularNeighborhood{\Submanifold}$.
Let
$
  \Grassmannian{k}{\Reals}{\infty}
$
denote the Grassmannian of $k$-planes in $\Reals^{\infty}$ and let
$\TautologicalBundle{k}$ denote the corresponding tautological bundle. There is
a classifying map of the normal bundle
$\Submanifold \to \Grassmannian{k}{\Reals}{\infty}$, which is covered by a map
$\TubularNeighborhood{\Submanifold} \to \TautologicalBundle{k}$. This map
extends to a map $\Manifold \to \ThomSpace{\TautologicalBundle{k}}$, where
$\ThomSpace{\TautologicalBundle{k}}$ denotes the Thom space of
$\TautologicalBundle{k}$ i.e. the quotient of the corresponding disk bundle by
the corresponding sphere bundle.
Now the
the famous Pontryagin-Thom yields an equivalence
between bordism classes of submanifolds of codimension $k$ in $\Manifold$ and
homotopy classes of maps from $\Manifold$ to
$\ThomSpace{\TautologicalBundle{k}}$.
We will modify this construction by incorporating the desired fundamental group
of the complement of the submanifold. This is based on the following
observation:
\input{Lemma/SubmanifoldComplementSubcomplex.tex}
Fix groups $\Group$ and $H$ and a surjection $s\colon H \to \Group$ and let us
consider the following space:
\input{Definition/MappingSpaceThomSpace.tex}
Suppose we have some homology class
$
  \HomologyClass
  \in
  \HomologyOfGroupObject{n}{\Group}{\Rationals}
$ for $n\geq 2$.
Let $\Manifold$ denote some $n$-manifold that admits a map
$\ContinuousMap\colon \Manifold\to \ClassifyingSpace{\Group}$ that maps the
fundamental class of $\Manifold$ to a non-zero multiple of $\HomologyClass$.
Now, assume that $\Manifold$ admits a codimension two submanifold
$\Submanifold$ such that the fundamental group of $\Manifold \setminus
\Submanifold$ is isomorphic to $H$ and the induced map from $H$ to $\Group$ is
given by $s$. Then by Lemma~\ref{lem:Factorizes}, the tubular neighborhood
$
  \left(
  \TubularNeighborhood{\Submanifold},
  \Boundary{\TubularNeighborhood{\Submanifold}}
  \right)
$
maps to the pair
$
  \left(
  \ClassifyingSpace{\Group},
  \ClassifyingSpace{H}
  \right)
$, therefore there is a classifying map from $\Submanifold$ to
$
  \ClassifyingSpaceWithMapping
    {\left(\ClassifyingSpace{\Group},\ClassifyingSpace{H}\right)}
$
which extends to a map $\Manifold \to
\ThomSpace{\TautologicalBundle{2,\Group,H}}$ and it is easy to see that
therefore $\ContinuousMap$ factorizes through $\overline{\Evaluation}$.
This shows that if a manifold representative of $\HomologyClass$ has a
submanifold with complement $H$, then the composition of the representing map
and the quotient map to
$\MappingCone{\ClassifyingSpace{\Group}}{\ClassifyingSpace{H}}$
factorizes through $\overline{\Evaluation}$. We will show that a converse to
this is also true:

\input{Proposition/RepresentabilityComplement.tex}
This proposition translates the geometric setup (at least in the case with no
singularities) of Theorem~\ref{thm:ImmersedKnotsEtc} into an easy to check
algebraic criterion.

In order to showcase how to compute the homology of the Thom spaces involved,
we prove Theorem~\ref{thm:CentralizerDimension}:
\input{Theorem/NoHyperbolicComplement.tex}
\begin{remark}
This proof shows further that in order to represent homology classes of
higher degree one needs large centralizers in $H$.
Unfortunately, the centralizers cannot be too artificial. Indeed suppose that
one has two surjections $s_1\colon H_1 \to \Group$ and $s_2\colon H_2\to
\Group$ such that $s_1$ factorizes through $s_2$, then a lift as in
Proposition~\ref{prp:RepresentabilityComplement} for $s_1$ yields a lift for
$s_2$, since
the corresponding Thom spaces are natural with respect to the surjections.
Therefore, one cannot for example take products with abelian groups in order to
artificially inflate the centralizers in $H$.
\end{remark}

\paragraph{Incorporating singularities}
In this paragraph will sketch how to include the singularities of
Proposition~\ref{prp:GeneralizedTubNeighborhood} into this setup. While the
resulting
computations for free groups are not complicated, they are tedious and beyond
the scope of this paper.

The classifying space in Definition~\ref{def:ClassyfingSpaceWithMapping} was
used in
order to classify the normal bundle of the codimension two submanifolds we
wanted to investigate. Now imagine a self-transverse double intersection, what
does the normal data look like? There are two two-planes that intersect
trivially, they span the four-dimensional normal space of the intersecting
manifold. When looking at the unit 4-ball of this four-dimensional normal
space, the two two-planes form coordinate disks bounding a Hopf link. If the
complement would have fundamental group $H$, while the surrounding manifold
would have fundamental group $\Group$, then the inner of this 4-ball as well as
a small neighborhood of said Hopf link would map
to $\ClassifyingSpace{\Group}$, while the complement of the small neighborhood
of the Hopf link in $\Sphere{3}$ would map to $\ClassifyingSpace{H}$. In other
words, when
looking at the contribution of the transverse double point to the boundary of a
regular neighborhood of the singular subspace, one sees the complement of a
Hopf link and they map accordingly to the corresponding classifying spaces of
$\Group$ and $H$ respectively.

Therefore we define $\ClassifyingSpaceWithMappingHigherStratum{\Group,H}{(DP)}$
as the classifying space for tuples consisting of a $4$-dimensional bundle, two
trivially intersecting $2$-dimensional subbundles and a fibrewise map from the
pair
\[
  \left(
    \Ball{2}\times \Ball{2}
    ,
    \Sphere{1} \times \Ball{2}
    \cup
    \Ball{2} \times \Sphere{1}
    \setminus
    \left(
      \Sphere{1} \times \{0\}
      \cup
      \{0\} \times \Sphere{1}
    \right)
  \right)
\]
to $(\Group,H)$. This classifying space can be modeled as the homotopy quotient
of the mapping space
$
  \MappingSpace
    {
      \Ball{2}\times \Ball{2}
      ,
      \Sphere{1} \times \Ball{2}
      \cup
      \Ball{2} \times \Sphere{1}
      \setminus
      \left(
      \Sphere{1} \times \Ball{2}_\epsilon
      \cup
      \Ball{2}_\epsilon \times \Sphere{1}
      \right)
    }
    {
      \Group,H
    }
$
by its natural $\apply{\mathrm{SO}}{2}\times \apply{\mathrm{SO}}{2}$-action via
precomposition, where $\Ball{2}_\epsilon$ denotes some sufficiently small ball.
\begin{remark}
Note that in this case the $4$-dimensional bundle is
redundant; Nevertheless if one later wants to incorporate higher
self-intersections it becomes necessary.
\end{remark}
Now this classifying space admits a tautological $4$-dimensional bundle
as well, which we denote by $\TautologicalBundle{4,2,\Group,H}$, which contains
two $2$-dimensional subbundles $\TautologicalBundle{4,2,\Group,H}^1$ and
$\TautologicalBundle{4,2,\Group,H}^2$. It again carries a natural evaluation
which we also denote by $\Evaluation$.

Finally, it admits the following Thom like space:
\[
  \apply{T}{\TautologicalBundle{4,2,\Group,H}}
  \coloneqq
  \DiskBundle{\TautologicalBundle{4,2,\Group,H}}/
  \left(
    \Sphere{\TautologicalBundle{4,2,\Group,H}}
    \setminus
    \left(
      \SphereBundle{\TautologicalBundle{4,2,\Group,H}^1}_\epsilon
      \cup
      \SphereBundle{\TautologicalBundle{4,2,\Group,H}^2}_\epsilon
    \right)
  \right)
\]
Here $\SphereBundle{\TautologicalBundle{4,2,\Group,H}^i}_\epsilon$ denotes a
small neighborhood of $\SphereBundle{\TautologicalBundle{4,2,\Group,H}}^i$ in
$\SphereBundle{\TautologicalBundle{4,2,\Group,H}}$.

And again, by construction, $\Evaluation$ descends to a map
\[
  \overline{\Evaluation}
  \colon
  \apply{T}{\TautologicalBundle{4,2,\Group,H}}
  \to
  \MappingCone{\ClassifyingSpace{\Group}}{\ClassifyingSpace{H}}
\]
Note that $\SphereBundle{\TautologicalBundle{4,2,\Group,H}^1}_\epsilon\cup
\SphereBundle{\TautologicalBundle{4,2,\Group,H}^2}_\epsilon$ is a regular
neighborhood of a codimension two submanifold of
$\SphereBundle{\TautologicalBundle{4,2,\Group,H}}$ together with a map of pairs
\[
  \left(
    \SphereBundle{\TautologicalBundle{4,2,\Group,H}}
    ,
    \SphereBundle{\TautologicalBundle{4,2,\Group,H}}
    \setminus
    \left(
      \SphereBundle{\TautologicalBundle{4,2,\Group,H}^1}_\epsilon\cup
      \SphereBundle{\TautologicalBundle{4,2,\Group,H}^2}_\epsilon
    \right)
  \right)
  \to
  \left(\ClassifyingSpace{\Group},\ClassifyingSpace{H}\right)
\]

We denote the quotient
$
    \SphereBundle{\TautologicalBundle{4,2,\Group,H}}
    /
    \left(
      \SphereBundle{\TautologicalBundle{4,2,\Group,H}}
      \setminus
      \left(
        \SphereBundle{\TautologicalBundle{4,2,\Group,H}^1}_\epsilon\cup
        \SphereBundle{\TautologicalBundle{4,2,\Group,H}^2}_\epsilon
      \right)
    \right)
$
by
$\Boundary{\apply{T}{\TautologicalBundle{4,2,\Group,H}}}$.
This boundary space is a wedge of the two Thom spaces of the normal bundle of
$\SphereBundle{\TautologicalBundle{4,2,\Group,H}^1}$ and
$\SphereBundle{\TautologicalBundle{4,2,\Group,H}^2}$ in $
\SphereBundle{\TautologicalBundle{4,2,\Group,H}}$ and therefore admits a
map
\[
  \partial_{4,2}
  \colon
  \Boundary{\apply{T}{\TautologicalBundle{4,2,\Group,H}}}
  \to
  \ThomSpace{\TautologicalBundle{2,\Group,H}}.
\]

We denote the gluing
$
  \ThomSpace{\TautologicalBundle{2,\Group,H}}
  \cup_{\partial_{4,2}}
  \apply{T}{\TautologicalBundle{4,2,\Group,H}}
$
by
$X_{4,2,\Group,H}$. By construction both evaluations are compatible and
therefore extend to a map
$
  \overline{\Evaluation}
  \colon
  X_{4,2,\Group,H}
  \to
  \MappingCone{\ClassifyingSpace{\Group}}{H}
$.

Now it is easy to see that the proof of
Proposition~\ref{prp:RepresentabilityComplement}
adapts almost verbatim to prove the following:
\input{Proposition/RepresentabilityDoublePoints.tex}

In order to represent self-intersections with more layers, one has to consider
multiple two-dimensional bundles inside a 4-dimensional bundle and then proceed
the same. Note that if one wants to allow for self-intersections with different
numbers of layers, one has to glue different spaces to
$\ThomSpace{\TautologicalBundle{2,\Group,H}}$ (e.g. one for double points, one
for triple points, one for quadruple points etc.).

Finally Proposition~\ref{prp:TubNeighborhood} also allows a different kind of
singularity
which can also be modeled in the same way. In that case one has to consider
half-rays in a three dimensional space and built the associated Thom-like space
from their sphere bundles.

We want to emphasize here that it feels worthwhile to start making computations
for various groups, in particular free groups or their cousins like surface
groups and free products thereof, for these Thom spaces with
singularities. In this case the Thom isomorphism can be replaced by a simple
Serre spectral sequence computation and the associated Mayer-Vietoris sequence
is also manageable. The computations themselves are not really complicated but
rather tedious and ultimately this might provide a route to prove the
non-vanishing of the bounded cohomology of free groups.

%% file: Lemma/SubmanifoldComplementSubcomplex.tex
Let $\Manifold$ denote an $n$-dimensional manifold together with a codimension
$2$ submanifold $\Submanifold$. Note that by transversality, the inclusion
$\iota\colon \Manifold \setminus \Submanifold \to \Manifold$ induces a
surjection on fundamental groups.
\begin{lemma}
  \label{lem:Factorizes}
  In the above setting let, additionally, $\Group$ denote a discrete group and
  $\ContinuousMap \colon \Manifold \to \ClassifyingSpace{\Group}$ denote some
  map that induces a
  surjection $\HomotopyGroupOfObject{1}{\Manifold}{\ast}\to \Group$. This
  yields a map
  $
    \overline{\iota}
    \colon
    \ClassifyingSpace
      {\HomotopyGroupOfObject{1}{\Manifold\setminus\Submanifold}{\ast}}
    \to
    \ClassifyingSpace{\Group}
  $%
  .
  Then $\ContinuousMap$ is homotopic to a map of pairs
  \[
    \overline{\ContinuousMap}
    \colon
    \left(
      \Manifold,
      \Manifold
      \setminus
      \Submanifold
    \right)
    \to
    \left(\
      \ClassifyingSpace{\Group},
      \ClassifyingSpace
        {\HomotopyGroupOfObject{1}{\Manifold\setminus\Submanifold}{\ast}}
    \right)
  \]
\end{lemma}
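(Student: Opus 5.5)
Since $\ClassifyingSpace{\Group}$ and $\ClassifyingSpace{H}$ (with $H=\HomotopyGroupOfObject{1}{\Manifold\setminus\Submanifold}{\ast}$) are aspherical, I would prove the lemma by obstruction theory in two steps: first construct a map on the complement, then extend it over the rest of $\Manifold$ rel that part, keeping it homotopic to $\ContinuousMap$. The key point is that maps into a $\EMSpace{\pi}{1}$ are determined up to homotopy by their effect on fundamental groups.

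\textbf{Step 1: a nicer pair.} Replace the pair $(\Manifold,\Manifold\setminus\Submanifold)$ by $(\Manifold,\Manifold\setminus\Inner{\TubularNeighborhood{\Submanifold}})$, where $\TubularNeighborhood{\Submanifold}$ is a closed tubular neighborhood. The inclusion of $\Manifold\setminus\Inner{\TubularNeighborhood{\Submanifold}}$ into $\Manifold\setminus\Submanifold$ is a homotopy equivalence; indeed the complement deformation retracts radially onto it. So it suffices to produce a map of the new pair. Triangulate so that $C=\Manifold\setminus\Inner{\TubularNeighborhood{\Submanifold}}$ is a subcomplex, and assume $C$ is connected. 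Let $c\colon C\to\ClassifyingSpace{H}$ be the classifying map of its universal cover. It induces the identity on $\pi_1$ and is unique up to homotopy.

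\textbf{Step 2: compare on $C$.} The composite $\overline{\iota}\circ c\colon C\to\ClassifyingSpace{\Group}$ induces the homomorphism $\pi_1(C)\to\pi_1(\Manifold)\to\Group$, which is exactly what $\ContinuousMap|_C$ induces. Since $\ClassifyingSpace{\Group}$ is aspherical, maps from a CW complex into it are classified, up to free homotopy, by the conjugacy class of the induced homomorphism. Hence $\ContinuousMap|_C\simeq\overline{\iota}\circ c$. Care with basepoints is needed here, but fixing a basepoint in $C$ handles it.

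\textbf{Step 3: homotopy extension.} $(\Manifold,C)$ is a CW pair, so it has the homotopy extension property. Therefore the homotopy from Step 2 extends to a homotopy from $\ContinuousMap$ to a map $\overline{\ContinuousMap}$ whose restriction to $C$ equals $\overline{\iota}\circ c$. If $\overline{\iota}$ is replaced by a cofibration (via a mapping cylinder, so that $\ClassifyingSpace{H}\subset\ClassifyingSpace{\Group}$), then $\overline{\ContinuousMap}$ is a genuine map of pairs. Composing with the deformation retraction of Step 1 gives the statement for $\Manifold\setminus\Submanifold$ itself.

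\textbf{Main obstacle.} The only delicate point is Step 1: showing that $\Manifold\setminus\Submanifold$ really has the homotopy type of the compact subcomplex $C$, and keeping basepoints and conjugacy under control in Step 2. Everything else is the standard fact that $[X,\EMSpace{\Group}{1}]$ equals $\mathrm{Hom}(\pi_1X,\Group)/\mathrm{conj}$. Surjectivity of $\pi_1(C)\to\pi_1(\Manifold)$, which follows from transversality, is not strictly needed for this lemma. I would note it only because it makes $\overline{\iota}$ induce a surjection, as used later in the paper.
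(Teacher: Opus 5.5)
Your Steps 1--3 follow the paper's own proof, just with more detail. Asphericity of $B\Gamma$ makes $f$ restricted to the complement homotopic to a map that factors through $BH$. The cofibration (homotopy extension) property of the complement then extends this homotopy to all of $M$. This correctly proves the lemma for the pair $(M, M\setminus \mathring{U}_N)$. That is what the paper's proof actually establishes, and it is the form used afterwards, where $(U_N,\partial U_N)$ is mapped to $(B\Gamma, BH)$. Your use of the closed set $C=M\setminus\mathring{U}_N$ is the right way to make the paper's ``$M\setminus\overline{U_N}$'' precise, since only a closed subset can be a cofibration here.

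The last sentence of Step 3 does not work. The deformation retraction $M\setminus N\to C$ is not defined on $N$, so composing with it does not give a continuous map on $M$.

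In fact the version with the open complement fails once $BH$ is a closed subspace of $B\Gamma$, as in your mapping cylinder model. Since $N$ has positive codimension, $M\setminus N$ is dense in $M$. Hence $\bar f(M)\subset\overline{\bar f(M\setminus N)}\subset BH$, so $f_*$ would have to lift through $H\to\Gamma$.

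A concrete counterexample: take $M=T^2$, $N$ a point, $f=\mathrm{id}$, $\Gamma=\mathbf{Z}^2$ and $H=F_2$. A lift would be a homomorphism $\mathbf{Z}^2\to F_2$ whose composite with the abelianization $F_2\to\mathbf{Z}^2$ is the identity. This is impossible, because abelian subgroups of $F_2$ are cyclic.

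So drop that sentence and read the lemma with $M\setminus\mathring{U}_N$ in place of $M\setminus N$, as the paper implicitly does. With that reading your argument is complete.
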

\begin{proof}
  Since $\ClassifyingSpace{\Group}$ is
  aspherical the restriction of $\ContinuousMap$ to $\Manifold\setminus
  \Submanifold$ factorizes up to homotopy through
  $
    \ClassifyingSpace
      {\HomotopyGroupOfObject{1}{\Manifold\setminus\Submanifold}{\ast}}
  $
  Now the fact that the inclusion of $\Manifold \setminus
  \overline{\TubularNeighborhood{\Submanifold}}$ into $\Manifold$ is a
  cofibration yields the desired result.
  .
\end{proof}

%% file: Definition/MappingSpaceThomSpace.tex
\begin{definition}
  \label{def:ClassyfingSpaceWithMapping}
  Fix two groups $\Group$ and $H$ together with a surjection $s\colon H\to
  \Group$.
  Let
  $
    \ClassifyingSpaceWithMapping
      {\left(\ClassifyingSpace{\Group},\ClassifyingSpace{H}\right)}
  $
  denote the classifying space of oriented two-dimensional vector bundles
  that carry a map
  $
    (\Ball{2},\Sphere{1})
    \to
    (\ClassifyingSpace{\Group},\ClassifyingSpace{H})
  $
  as tangential data. A topological model of this classifying space is given by
  the homotopy quotient of
  $
    \MappingSpace
      {(\Ball{2},\Sphere{1})}
      {(\ClassifyingSpace{\Group},\ClassifyingSpace{H})}
  $
  by its natural $\apply{\mathrm{SO}}{2}$-action via precomposition.

  Via pullback this space carries a $2$-dimensional tautological bundle, which
  we denote by $\TautologicalBundle{2,\Group,H}$. Note that
  the evaluation
  \[
    \Evaluation
    \colon
    \MappingSpace
      {(\Ball{2},\Sphere{1})}
      {(\ClassifyingSpace{\Group},\ClassifyingSpace{H}}
    \times
    (\Ball{2},\Sphere{1})
    \to
    (\ClassifyingSpace{\Group},\ClassifyingSpace{H})
  \]
  yields a map
  \[
    \left(
      \DiskBundle{\TautologicalBundle{2,\Group,H}},
      \SphereBundle{\TautologicalBundle{2,\Group,H}}
    \right)
    \to
    \left(
      \ClassifyingSpace{\Group},
      \ClassifyingSpace{H}
    \right)
  \]
  which we also denote by $\Evaluation$.

  Finally this relative mapping, descends to a map from the Thom space
  $\ThomSpace{\TautologicalBundle{2,\Group,H}}$ to the mapping cone
  $\MappingCone{\ClassifyingSpace{\Group}}{H}$, which we denote by
  $\overline{\Evaluation}$.
\end{definition}

%% file: Proposition/RepresentabilityComplement.tex
\begin{proposition}
\label{prp:RepresentabilityComplement}
  Let $\Group$ denote a group and $H \to \Group$ a fixed surjection and let
  $\HomologyClass$ denote a rational homology class of $\Group$. Then the
  following are equivalent:
  \begin{itemize}
  \item
    There exists a manifold $\Manifold$, a map $\ContinuousMap\colon\Manifold
    \to \ClassifyingSpace{\Group}$ and a
    submanifold $\Submanifold\subset \Manifold$ such that $\ContinuousMap$
    maps the fundamental class of $\Manifold$ to a rational non-zero multiple
    of $\HomologyClass$ and the
    fundamental group of $\Manifold \setminus \Submanifold$ is isomorphic to
    $H$ and the restriction of $\ContinuousMap$ induces the fixed surjection.
  \item
    The image of $\HomologyClass$ in the rational homology of
    $\MappingCone{\ClassifyingSpace{\Group}}{\ClassifyingSpace{H}}$ is hit by
    \[
      \overline{\Evaluation}
      \colon
      \ThomSpace{\TautologicalBundle{2,\Group,H}}
      \to
      \MappingCone{\ClassifyingSpace{\Group}}{H}.
    \]
  \end{itemize}
\end{proposition}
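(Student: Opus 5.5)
The forward direction, from the first to the second item, is essentially the discussion preceding the statement, and I would make it precise. Given $\Manifold$, $\ContinuousMap$ and $\Submanifold$, Lemma~\ref{lem:Factorizes} lets me homotope $\ContinuousMap$ to a map of pairs
$(\Manifold,\Manifold\setminus\Submanifold)\to(\ClassifyingSpace{\Group},\ClassifyingSpace{H})$.
Restricting to a tubular neighborhood $\TubularNeighborhood{\Submanifold}$ gives a fibrewise map
$(\DiskBundle{\NormalBundle{\Manifold}{\Submanifold}},\SphereBundle{\NormalBundle{\Manifold}{\Submanifold}})\to(\ClassifyingSpace{\Group},\ClassifyingSpace{H})$.
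This is exactly the data classified by $\ClassifyingSpaceWithMapping{(\ClassifyingSpace{\Group},\ClassifyingSpace{H})}$. The Pontryagin--Thom collapse then yields $\Manifold\to\ThomSpace{\TautologicalBundle{2,\Group,H}}$. Composing with $\overline{\Evaluation}$ agrees with $\Manifold\to\ClassifyingSpace{\Group}\to\MappingCone{\ClassifyingSpace{\Group}}{\ClassifyingSpace{H}}$ up to homotopy, since the complement of $\TubularNeighborhood{\Submanifold}$ lands in $\ClassifyingSpace{H}$, which is collapsed. Applying this to $\FundamentalClass{\Manifold}$ shows that a nonzero multiple of the image of $\HomologyClass$ is hit, and rationally this suffices.

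For the converse I would run Pontryagin--Thom backwards. Suppose the image of $\HomologyClass$ equals $\overline{\Evaluation}_*(\beta)$ for some $\beta\in\HomologyOfSpaceObject{n}{\ThomSpace{\TautologicalBundle{2,\Group,H}}}{\Rationals}$.

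The first step is to pass to oriented bordism. Rationally $\Omega_*(X)\otimes\Rationals\to H_*(X;\Rationals)$ is surjective, so a multiple of $\beta$ is represented by a closed manifold $\Manifold_0\to\ThomSpace{\TautologicalBundle{2,\Group,H}}$. Making this map transverse to the zero section, with the Thom space approximated by finite skeleta, produces a codimension-two submanifold $\Submanifold_0$ whose normal bundle is pulled back from $\TautologicalBundle{2,\Group,H}$. Via the evaluation this gives a map of pairs from $(\Manifold_0,\Manifold_0\setminus\Submanifold_0)$ to $(\ClassifyingSpace{\Group},\ClassifyingSpace{H})$.

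The difficulty is that $\Manifold_0\setminus\Submanifold_0$ only maps to $\ClassifyingSpace{H}$; its fundamental group need not be $H$, and $\pi_1(\Manifold_0)$ need not be $\Group$. I also need the image of $\FundamentalClass{\Manifold_0}$ in $\ClassifyingSpace{\Group}$ to be a multiple of $\HomologyClass$, not just to agree in the mapping cone. For the last point I would use the long exact sequence of the pair $(\ClassifyingSpace{\Group},\ClassifyingSpace{H})$. The discrepancy comes from $\HomologyOfSpaceObject{n}{\ClassifyingSpace{H}}{\Rationals}$. I would correct it by taking a disjoint union with a manifold mapping to $\ClassifyingSpace{H}\subset\ClassifyingSpace{\Group}$ (again available rationally by bordism) and containing no submanifold, with an orientation reversed as needed.

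The main obstacle is the surgery step that fixes fundamental groups. I would do surgery below the middle dimension, away from $\Submanifold_0$, on the complement $W=\Manifold_0\setminus\TubularNeighborhood{\Submanifold_0}$ relative to its boundary, so as to make $W\to\ClassifyingSpace{H}$ an isomorphism on $\pi_1$.

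First I take connected sums to achieve connectivity. Then I add $1$-handles (boundary connected sums with $\Sphere{1}\times\Sphere{n-1}$ mapped to generators) to make the map $\pi_1$-surjective. Finally I kill the kernel by surgeries on embedded circles in the interior of $W$, which is possible since $n\ge 4$ for embedded circles to be generic; for small $n$ I would either exclude the case or treat it separately. Each surgery is performed compatibly with the map to $\ClassifyingSpace{H}$, since the killed loops are null there, and it does not change the rational fundamental class image.

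After gluing back $\TubularNeighborhood{\Submanifold_0}$, van Kampen together with surjectivity of $s$ shows that $\pi_1(\Manifold)$ is $H$ modulo meridians. Here the subtle point is that this quotient should be $\Group$. If the kernel of $s$ is not normally generated by meridian images, I would add further components to $\Submanifold$ inside small balls: unknotted spheres whose meridians are sent to the missing kernel elements. Such components do not change the fundamental class of $\Manifold$.
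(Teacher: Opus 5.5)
Your forward direction is fine and matches the discussion before the proposition. The converse, however, breaks at the claim that the evaluation gives a map of pairs $(M_0,M_0\setminus N_0)\to(B\Gamma,BH)$. The Pontryagin--Thom map $M_0\to\mathrm{Th}(\gamma_{2,\Gamma,H})$ sends all of $W_0=M_0\setminus\mathrm{int}\,U_{N_0}$ to the basepoint, and $\overline{\mathrm{ev}}$ sends that basepoint to the cone point. The evaluation therefore only provides $(U_{N_0},\partial U_{N_0})\to(B\Gamma,BH)$. The map $\partial U_{N_0}\to BH$ need not extend over $W_0$. Already on $\pi_1$ it can fail: a push-off of a loop of $N_0$ may be null-homotopic in $W_0$ while the normal data sends it to a non-trivial element of $H$. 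So you have neither a map $M_0\to B\Gamma$ whose fundamental class you could correct, nor a map $W_0\to BH$ relative to which you could do surgery.

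The missing idea, which is the core of the paper's proof, is to discard $W_0$ and build a new null-bordism.

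\emph{Step 1: the boundary class vanishes.} By hypothesis the image of $[U_{N_0},\partial U_{N_0}]$ in $H_n(B\Gamma,BH;\mathbf{Q})$ is a multiple of the image of $\alpha$, so it comes from $H_n(B\Gamma;\mathbf{Q})$. The long exact sequence of the pair then shows that $[\partial U_{N_0}]$ maps to zero in $H_{n-1}(BH;\mathbf{Q})$.

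\emph{Step 2: a null-bordism over $BH$.} Turning this into a null-bordism needs a bordism theory that is rationally just homology. Oriented bordism is not, since its coefficients are rationally non-trivial, so homological vanishing alone does not give a rational null-bordism. This is why the paper starts from a stably framed representative $M_0$. Then $\partial U_{N_0}$, being two-sided of codimension one, is framed, and $\Omega^{\mathrm{fr}}_*(BH)\otimes\mathbf{Q}\cong H_*(BH;\mathbf{Q})$ yields a null-bordism $W\to BH$ of $k\cdot\partial U_{N_0}$.

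\emph{Step 3: correcting the class.} For $M'=k\cdot U_{N_0}\cup W\to B\Gamma$, excision shows that the class in the mapping cone is $k$ times the old one. Hence $[M']$ differs from a multiple of $\alpha$ by a class coming from $H_n(BH;\mathbf{Q})$. From here your plan works. Absorbing the correcting manifold into $W$ before doing the surgeries that make $\pi_1(W)\cong H$ is actually cleaner than the paper's last sentence. Note that this step, as in the paper, tacitly needs $H$ finitely presented.

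Finally, drop your last paragraph. The statement does not ask for $\pi_1(M)\cong\Gamma$. Moreover, an unknotted $S^{n-2}$ in a small ball contributes its meridian as a free generator, making the complement's fundamental group $H*\mathbf{Z}$ rather than $H$.
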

\begin{proof}
  We have already established that if such a manifold exists, then the
  composition $\Manifold \to \MappingCone{\ClassifyingSpace{\Group}}{H}$
  factorizes through $\ThomSpace{\TautologicalBundle{2,\Group,H}}$, which
  establishes the first implication.

  Now assume that $\HomologyClass$ lies in the image of the homology of
  $\ThomSpace{\TautologicalBundle{2,\Group,H}}$. Pick a framed representative
  $f\colon \Manifold\to \ThomSpace{\TautologicalBundle{2,\Group,H}}$ of the
  preimage of
  $\HomologyClass$ which is transversal to the zero section
  in $\ThomSpace{\TautologicalBundle{2,\Group,H}}$. Let $\Submanifold$ denote
  the preimage of the zero section. Since the boundary of the tubular
  neighborhood $\Boundary{\TubularNeighborhood{\Submanifold}}$ is a two-sided
  codimension one submanifold of $\Manifold$ it
  is framed as well. Now additionally the long exact sequence of the pair
  $(\ClassifyingSpace{\Group},\ClassifyingSpace{H})$ shows that the
  composition of $f$ and $\Evaluation$ maps the fundamental class of
  $\Boundary{\TubularNeighborhood{\Submanifold}}$ to zero in the
  rational homology of $\ClassifyingSpace{H}$. Since framed bordism and
  singular homology agree rationally, there exists a multiple
  $k\cdot\Manifold$
  such that the map from $k\cdot\Boundary{\TubularNeighborhood{\Submanifold}}$
  to $\ClassifyingSpace{H}$ is framed nullbordant. Additionally, via surgery
  one can always arrange such a nullbordism to have fundamental group
  isomorphic to $H$.
  Let us denote such a
  nullbordism by $W$ and the corresponding map by $g_W$. Let us denote
  $\TubularNeighborhood{\Submanifold} \cup W$ by $\Manifold'$. By construction
  there exists a map $f'$ from $\Manifold'$ to $\ClassifyingSpace{\Group}$
  which
  maps the tubular neighborhood according to $\Evaluation$ and the nullbordism
  according to $g_W$ followed by the map inducing the fixed surjection. It
  follows from excision that $f'$ still represents the same rational homology
  class in the rational homology of
  $\MappingCone{\ClassifyingSpace{\Group}}{\ClassifyingSpace{H}}$. Therefore
  the image of the fundamental class of $\Manifold'$ in the rational homology
  of $\ClassifyingSpace{\Group}$ differs from $\HomologyClass$ by a class in
  the image of $\ClassifyingSpace{H}$. Evidently, every class stemming from
  $\ClassifyingSpace{H}$ is representable by a manifold as claimed by taking
  the submanifold to be empty. Therefore the other implication holds as well.
\end{proof}

%% file: Theorem/NoHyperbolicComplement.tex
\begin{proof}[Proof of Theorem~\ref{thm:CentralizerDimension}]
  We will prove the theorem by computing the homology of
  $\ThomSpace{\TautologicalBundle{2,\Group,H}}$. The Thom isomorphism
  identifies the homology of $\ThomSpace{\TautologicalBundle{2,\Group,H}}$ with
  the homology of
  $
    \ClassifyingSpaceWithMapping
      {\left(\ClassifyingSpace{\Group},\ClassifyingSpace{H}\right)}
  $%
  .
  As we have mentioned before a model of
  $
    \ClassifyingSpaceWithMapping
      {\left(\ClassifyingSpace{\Group},\ClassifyingSpace{H}\right)}
  $
  is given by the homotopy quotient of the mapping space
  $
    \MappingSpace
      {\left(\Ball{2},\Sphere{1}\right)}
      {\left(\ClassifyingSpace{\Group},\ClassifyingSpace{H}\right)}
  $
  by its natural $\apply{\mathrm{SO}}{2}$-action.
  Since $\ClassifyingSpace{\Group}$ is aspherical, it is easy to see that the
  restriction map
  $
    \MappingSpace
      {\left(\Ball{2},\Sphere{1}\right)}
      {\left(\ClassifyingSpace{\Group},\ClassifyingSpace{H}\right)}
    \to
    \MappingSpace{\Sphere{1}}{\ClassifyingSpace{H}}
  $
  is a $\apply{\mathrm{SO}}{2}$-equivariant homotopy equivalence on the
  connected components that it hits. Now a standard result in algebraic
  topology says that the connected components of
  $\MappingSpace{\Sphere{1}}{\ClassifyingSpace{H}}$ are given by the conjugacy
  classes of elements in $H$ and the connected component associated to a
  conjugacy class $[h]$ is the Eilenberg-MacLane space
  $\EMSpace{\apply{C_H}{h}}{1}$, where
  $\apply{C_H}{h}$ denotes the centralizer of $h$ in $H$.

  Now we have the following exact sequence
  \[
    1 \to \langle h\rangle \to \apply{C_H}{h} \to \apply{C_H}{h}/\langle
    h\rangle
    \to 1.
  \]
  Since $H$ is torsion-free $\langle h\rangle$ is infinite cyclic,
  additionally it is easy
  to see that the $\apply{\mathrm{SO}}{2}$-action on the centralizer
  corresponds to rotation along the $\Sphere{1}$-fiber of the classifying space
  of the centralizer. In particular this action is free. Therefore the homotopy
  quotient is given by $\ClassifyingSpace{\apply{C_H}{h}/\langle h\rangle}$,
  which has
  dimension one less than the cohomological dimension of the centralizer
  $\apply{C_H}{h}$.

  Therefore
  $
    \ClassifyingSpaceWithMapping
      {\left(\ClassifyingSpace{\Group},\ClassifyingSpace{H}\right)}
  $
  is homotopy equivalent to a disjoint union of classifying spaces of
  $\apply{C_H}{h}/\langle h\rangle$ for all the non-zero conjugacy classes of
  elements in
  the kernel of $s\colon H \to \Group$ and one copy of $\ComplexNumbers
  P^\infty$ for
  the trivial conjugacy class. Therefore the Thom space is homotopy equivalent
  to a wedge of Thom spaces of the tautological bundles on the classifiyng
  spaces of the centralizers and the Thom
  space of the tautological bundle of
  $\ComplexNumbers P^\infty$ (which is homotopy equivalent to $\ComplexNumbers
  P^\infty$ as well). Since the projective space component corresponds to the
  trivial conjugacy class, the corresponding null homotopy in
  $\ClassifyingSpace{\Group}$ is constant as well, meaning that this whole
  component maps to the cone point in
  $\MappingCone{\ClassifyingSpace{\Group}}{H}$. In particular its homology is
  mapped to zero in the mapping cone. Therefore we conclude that the image of
  the homology of the Thom space is non-zero in degrees at most $(c-1) + 2$,
  where
  $c$ denotes the maximal rational cohomological dimension of an element in the
  centralizer of the kernel of $s$.
\end{proof}
Note that the computation carried out in this proof is a general blue print for
computing these Thom spaces.

%% file: Proposition/RepresentabilityDoublePoints.tex
\begin{proposition}
  \label{prp:RepresentabilityComplementDoublePoints}
  Let $\Group$ denote a group and $H \to \Group$ a fixed surjection and let
  $\HomologyClass$ denote a rational homology class of $\Group$. Then the
  following are equivalent:
  \begin{itemize}
    \item
    There exists a manifold $\Manifold$, a map $\ContinuousMap\colon\Manifold
    \to \ClassifyingSpace{\Group}$ and an immersed
    submanifold $\Submanifold\subset \Manifold$ which only has transverse
    double points as self-intersections, such that
    $\ContinuousMap$
    maps the fundamental class of $\Manifold$ to a rational non-zero multiple
    of $\HomologyClass$ and the
    fundamental group of $\Manifold \setminus \Submanifold$ is isomorphic to
    $H$ and the restriction of $\ContinuousMap$ induces the fixed surjection.
    \item
    The image of $\HomologyClass$ in the rational homology of
    $\MappingCone{\ClassifyingSpace{\Group}}{\ClassifyingSpace{H}}$ is hit by
    \[
      \overline{\Evaluation}
      \colon
      X_{4,2,\Group,H}
      \to
      \MappingCone{\ClassifyingSpace{\Group}}{H}.
    \]
  \end{itemize}
\end{proposition}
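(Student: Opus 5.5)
We follow the proof of Proposition~\ref{prp:RepresentabilityComplement}, adding one stratum for the double points.

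\emph{Forward implication.} Suppose $\Manifold$, $\ContinuousMap$ and an immersed $\Submanifold$ with only transverse double points are given. The double point set $D$ is a closed codimension four submanifold. Its normal bundle is a $4$-dimensional bundle containing the two $2$-dimensional normal subbundles of the two sheets, and these intersect trivially.

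First we arrange $\ContinuousMap$ as a map of pairs. The argument of Lemma~\ref{lem:Factorizes} goes through verbatim for a regular neighborhood $\TubularNeighborhood{\Submanifold}$, since its complement is still a cofibrant subspace. This gives
\[
  \left(\Manifold,\Manifold\setminus \TubularNeighborhood{\Submanifold}\right)
  \to
  \left(\ClassifyingSpace{\Group},\ClassifyingSpace{H}\right).
\]
We write $\TubularNeighborhood{\Submanifold}$ as the union of the disk bundle of the normal bundle of $D$ and the normal $\Ball{2}$-bundle of $\hat{\Submanifold}=\Submanifold\setminus(\text{open neighborhood of }D)$.

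Restricted to the normal $4$-disks over $D$, the map yields exactly the data classified by $\ClassifyingSpaceWithMappingHigherStratum{\Group,H}{(DP)}$. Over each point, the complement of the Hopf-link neighborhood in $\Sphere{3}$ lands in $\ClassifyingSpace{H}$. This gives a classifying map $D\to \ClassifyingSpaceWithMappingHigherStratum{\Group,H}{(DP)}$ covered by a bundle map. Over $\hat{\Submanifold}$ we get a classifying map to $\ClassifyingSpaceWithMapping{(\ClassifyingSpace{\Group},\ClassifyingSpace{H})}$.

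The two maps agree along the boundary of the normal sphere bundle of $D$, because there the normal bundle of $\hat{\Submanifold}$ restricts to the normal bundles of the Hopf circles. This is precisely the gluing $\partial_{4,2}$. The collapse map therefore gives $\Manifold\to X_{4,2,\Group,H}$, and composing with $\overline{\Evaluation}$ recovers $\Manifold\to\MappingCone{\ClassifyingSpace{\Group}}{\ClassifyingSpace{H}}$ up to homotopy.

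\emph{Converse.} Take a rational class in $X_{4,2,\Group,H}$ mapping to the image of $\HomologyClass$. After passing to a multiple, represent it by a map $f\colon\Manifold\to X_{4,2,\Group,H}$ from a closed manifold; this is possible since rational bordism surjects onto rational homology. We then make $f$ transverse stratum by stratum.
\begin{enumerate}[(i)]
\item Make $f$ transverse to the zero section of $\TautologicalBundle{4,2,\Group,H}$ in $\apply{T}{\TautologicalBundle{4,2,\Group,H}}$. The preimage $D$ is a codimension four submanifold whose normal bundle contains two transverse $2$-plane subbundles.
\item Inside the normal disk bundle of $D$, the preimages of the two subbundles are two sheets meeting transversally along $D$.
\item Relative to this, make $f$ transverse to the zero section of $\TautologicalBundle{2,\Group,H}$ in $\ThomSpace{\TautologicalBundle{2,\Group,H}}$. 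The preimage is a codimension two submanifold with boundary, and it glues to the two sheets. The union $\Submanifold$ is an immersed submanifold with only transverse double points.
\end{enumerate}

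The boundary of its regular neighborhood $\TubularNeighborhood{\Submanifold}$ is a closed codimension one two-sided submanifold. By construction it comes with a map to $\ClassifyingSpace{H}$, given by the evaluation on each stratum. The exact sequence of $(\ClassifyingSpace{\Group},\ClassifyingSpace{H})$ shows that the class of $\Boundary{\TubularNeighborhood{\Submanifold}}$ vanishes rationally in $\ClassifyingSpace{H}$. Hence after taking a multiple it bounds some $W\to\ClassifyingSpace{H}$, and by surgery we may take $\pi_1(W)\cong H$. Set $\Manifold'=\TubularNeighborhood{\Submanifold}\cup W$. By excision $\Manifold'$ represents the same class in the mapping cone, and the discrepancy with $\HomologyClass$ comes from $\ClassifyingSpace{H}$, which is realized with empty $\Submanifold$.

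\emph{Main obstacle.} The delicate point is step (iii) together with the surgery. The transversality has to be carried out relative to the already constructed double point stratum, so that the two codimension two preimages match the sheets at the seam given by $\partial_{4,2}$. The surgeries on $W$ must also leave $\pi_1(\Manifold'\setminus\Submanifold)\cong H$ with the correct surjection. For the transversality we would first use the collar structure of the Thom-like space near $\Boundary{\apply{T}{\TautologicalBundle{4,2,\Group,H}}}$. For the fundamental group we would use that surgeries below the middle dimension in the interior of $W$ do not alter $\pi_1$ of $\Boundary{\TubularNeighborhood{\Submanifold}}$, and then apply van Kampen.
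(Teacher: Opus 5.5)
Your forward direction and the stratified transversality in the converse are the adaptation the paper has in mind. The converse breaks earlier than the step you single out, namely at ``the class of $\partial U_N$ vanishes rationally in $BH$, hence after taking a multiple it bounds some $W\to BH$''.

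You chose $f\colon M\to X_{4,2,\Gamma,H}$ as an arbitrary oriented representative, and for oriented bordism this implication is false. Rationally, $\Omega_{n-1}(BH)\otimes\mathbf{Q}\cong\bigoplus_{k\geq 0}H_{n-1-4k}(BH;\mathbf{Q})\otimes\Omega_{4k}\otimes\mathbf{Q}$, and the exact sequence of $(B\Gamma,BH)$ only kills the $k=0$ summand. The other summands are detected by the classes $g_*(p_I(\partial U_N)\cap[\partial U_N])$. Up to sign, these are the images under $\partial\colon H_*(B\Gamma,BH)\to H_{*-1}(BH)$ of $(\overline{\mathrm{ev}}\circ f)_*(p_I(M)\cap[M])$, and only the top class $[M]$ is known to come from $H_*(B\Gamma)$.

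Concretely, take $\Gamma$ trivial and $H=\mathbf{Z}$. The component of $X_{(B\Gamma,BH)}$ belonging to a generator of $H$ is contractible, so it contributes a wedge summand $S^2$ to $\mathrm{Th}(\gamma_{2,\Gamma,H})\subset X_{4,2,\Gamma,H}$. The projection $S^2\times\mathbf{C}P^2\to S^2$ is then a legitimate representative of the class $0$, which maps to the image of $\alpha=0$. Your procedure yields $N=\{\mathrm{pt}\}\times\mathbf{C}P^2$ and $\partial U_N=S^1\times\mathbf{C}P^2\to S^1=BH$. This is zero in $H_5(S^1;\mathbf{Q})$, but no multiple of it bounds over $S^1$: the transverse preimage of a point is $\mathbf{C}P^2$, which has signature $1$.

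The missing idea is the one non-formal ingredient of the paper's proof of Proposition~\ref{prp:RepresentabilityComplement}, which is meant to carry over verbatim. Choose the representative $f\colon M\to X_{4,2,\Gamma,H}$ stably framed. This is possible after passing to a multiple, because $\Omega^{\mathrm{fr}}_n(X)\otimes\mathbf{Q}\cong\pi^s_n(X_+)\otimes\mathbf{Q}\cong H_n(X;\mathbf{Q})$. Then $\partial U_N$, being a closed two-sided hypersurface in $M$, inherits a stable framing. Rational framed bordism of $BH$ is detected by rational homology alone, so the vanishing of $g_*[\partial U_N]$ in $H_{n-1}(BH;\mathbf{Q})$ does give a framed nullbordism of a multiple. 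After that you may forget the framing and perform your surgeries on $W$.

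With this change, the rest of your argument is the paper's: the gluing $M'=U_N\cup W$, excision, and the correction by a class from $BH$. Incidentally, no van Kampen argument is needed for $\pi_1(M'\setminus N)$. Since $U_N\setminus N\cong\partial U_N\times[0,1)$, one has $M'\setminus N\simeq W$.
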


%% file: sources.bib
@article{GromovBoundedCohomology,
     author = {Gromov, Michael},
     title = {Volume and bounded cohomology},
     journal = {Publications Math\'ematiques de l'IH\'ES},
     pages = {5--99},
     publisher = {Institut des Hautes \'Etudes Scientifiques},
     volume = {56},
     year = {1982},
     mrnumber = {84h:53053},
     zbl = {0516.53046},
     language = {en},
     url = {http://www.numdam.org/item/PMIHES_1982__56__5_0/}
}

@article {MatsumotoMoritaBoundedCohomology,
    AUTHOR = {Matsumoto, Shigenori and Morita, Shigeyuki},
     TITLE = {Bounded cohomology of certain groups of homeomorphisms},
   JOURNAL = {Proc. Amer. Math. Soc.},
  FJOURNAL = {Proceedings of the American Mathematical Society},
    VOLUME = {94},
      YEAR = {1985},
    NUMBER = {3},
     PAGES = {539--544},
      ISSN = {0002-9939},
   MRCLASS = {55N99 (57T99)},
  MRNUMBER = {787909},
       DOI = {10.2307/2045250},
       URL = {https://doi.org/10.2307/2045250},
}

@article{SomaThird,
title = {Bounded cohomology of closed surfaces},
journal = {Topology},
volume = {36},
number = {6},
pages = {1221-1246},
year = {1997},
issn = {0040-9383},
doi = {https://doi.org/10.1016/S0040-9383(97)00003-7},
url = {https://www.sciencedirect.com/science/article/pii/S0040938397000037},
author = {Teruhiko Soma}
}

@inbook{BrooksSecondBC,
url = {https://doi.org/10.1515/9781400881550-006},
title = {Some Remarks on Bounded Cohomology},
booktitle = {Riemann Surfaces and Related Topics (AM-97), Volume 97},
booktitle = {Proceedings of the 1978 Stony Brook Conference. (AM-97)},
author = {Robert Brooks},
editor = {Irwin Kra and Bernard Maskit},
publisher = {Princeton University Press},
address = {Princeton},
pages = {53--64},
doi = {doi:10.1515/9781400881550-006},
isbn = {9781400881550},
year = {1981},
lastchecked = {2025-03-06}
}

@BOOK{FrigerioBook,
  title     = "Bounded cohomology of discrete groups",
  author    = "Frigerio, Roberto",
  publisher = "American Mathematical Society",
  series    = "Mathematical Surveys and Monographs",
  month     =  dec,
  year      =  2017,
  address   = "Providence, RI",
  language  = "en"
}

@ARTICLE{WalshGreatCircleLinks,
  title     = "Great circle links and virtually fibered knots",
  author    = "Walsh, Genevieve S",
  journal   = "Topology",
  publisher = "Elsevier BV",
  volume    =  44,
  number    =  5,
  pages     = "947--958",
  month     =  sep,
  year      =  2005,
  copyright = "https://www.elsevier.com/open-access/userlicense/1.0/",
  language  = "en"
}

@BOOK{FrigerioMoraschiniMutliComplexes,
  title     = "Gromov's theory of multicomplexes with applications to bounded
               cohomology and simplicial volume",
  author    = "Frigerio, Roberto and Moraschini, Marco",
  publisher = "American Mathematical Society",
  series    = "Memoirs of the American Mathematical Society",
  month     =  may,
  year      =  2023,
  address   = "Providence, RI",
  language  = "en"
}

@ARTICLE{KastenholzOpenBooks,
  title         = "Simplicial volume of open books in dimension 4",
  author        = "Kastenholz, Thorben",
  month         =  apr,
  year          =  2025,
  copyright     = "http://creativecommons.org/licenses/by-nc-sa/4.0/",
  archivePrefix = "arXiv",
  primaryClass  = "math.GT",
  eprint        = "2504.10975"
}

@ARTICLE{3ManifoldGroups,
  title         = "Two aspects of graph 3-manifold groups",
  author        = "Sun, Hongbin",
  month         =  jul,
  year          =  2026,
  copyright     = "http://creativecommons.org/licenses/by/4.0/",
  archivePrefix = "arXiv",
  primaryClass  = "math.GT",
  eprint        = "2607.04326"
}

@book{MilnorStasheff,
  title     = {Characteristic Classes},
  author    = {Milnor, John W. and Stasheff, James D.},
  series    = {Annals of Mathematics Studies},
  volume    = {76},
  year      = {1974},
  publisher = {Princeton University Press},
  address   = {Princeton, N.J.},
  isbn      = {978-0-691-08122-9},
  doi       = {10.1515/9781400881826}
}
